\theoremstyle{plain}
\newtheorem{theorem}{Theorem}[section]
\newtheorem{proposition}[theorem]{Proposition}
\newtheorem{corollary}[theorem]{Corollary}
\newtheorem{lemma}[theorem]{Lemma}
\newtheorem{definition}[theorem]{Definition}
\theoremstyle{definition}
\newtheorem{remark}[theorem]{Remark}
\newtheorem{notation}[theorem]{Notation}
\DeclarePairedDelimiter\floor{\lfloor}{\rfloor}
\DeclarePairedDelimiter\abs{\lvert}{\rvert}%
\newcommand{\B}{\mathcal{B}}
\newcommand{\N}{\mathbb{N}}
\newcommand{\Z}{\mathbb{Z}}
\newcommand{\X}{\mathsf{X}}
\newcommand{\Y}{\mathsf{Y}}
\newcommand{\M}{\mathsf{M}}
\newcommand{\m}{\mathsf{m}}
\newcommand{\f}{\mathsf{W}}
\newcommand{\Zn}{\mathsf{Z}}
\newcommand{\E}{\mathbb{E}}
\newcommand{\F}{\mathcal{F}}
\newcommand{\Pb}{\mathbb{P}}
\newcommand{\indicator}{\mathds{1}}
\newcommand{\dequal}{\overset{\mathcal{D}}{=}}
\newcommand{\weaklyto}{\xrightarrow{\mathcal{D}}}
\newcommand{\pref}[1]{\hyperref[#1]{[p. \pageref{#1}]}}
\newcommand{\old}[1]{}
\title{Interpolating between random walk and rotor walk}
\author{
Wilfried Huss\footnote{Graz University of Technology,
\texttt{huss@math.tugraz.at}. Research supported by the Austrian Science Fund (FWF): 
\href{http://pf.fwf.ac.at/en/research-in-practice/in-the-spotlight-schroedinger/list-of-schroedinger-fellows/2014/8523}{J3628-N26}},
Lionel Levine\footnote{Cornell University, \texttt{levine@math.cornell.edu}. Research supported by NSF grant \href{http://www.nsf.gov/awardsearch/showAward?AWD_ID=1455272}{DMS-1455272} and a Sloan Fellowship.},
Ecaterina Sava-Huss\footnote{Graz University of Technology, \texttt{sava-huss@tugraz.at}. Research supported by the Austrian Science Fund (FWF):
\href{http://pf.fwf.ac.at/en/research-in-practice/in-the-spotlight-schroedinger/list-of-schroedinger-fellows/2014/210946}{J3575-N26}}
}
\date{April 7, 2016}
\begin{document}

\maketitle

\begin{abstract}
We introduce a family of stochastic processes on the integers, depending on a parameter $p \in [0,1]$ and interpolating between the deterministic rotor walk ($p=0$) and the simple random walk ($p=1/2$). 
This \textbf{p-rotor walk} is not a Markov chain but it has a \emph{local} Markov property: for each $x \in \Z$ the sequence of successive exits from $x$ is a Markov chain.
The main result of this paper identifies the scaling limit of the p-rotor walk 
with two-sided i.i.d.\ initial rotors. 
The limiting process takes the form $\sqrt{\frac{1-p}{p}} X(t)$, where $X$
is a doubly perturbed Brownian motion, that is, it  satisfies the implicit equation
\begin{equation}
\label{eq:implicit}
X(t) = \B(t) + a \sup_{s\leq t} X(s) + b \inf_{s\leq t} X(s)
\end{equation}
for all $t \in [0,\infty)$. Here $\B(t)$ is a standard Brownian motion and $a,b<1$ are constants depending on the marginals of the initial rotors on $\N$ and $-\N$ respectively.  
Chaumont and Doney have shown that equation \eqref{eq:implicit} has a pathwise unique solution $X(t)$, and that the solution is almost surely continuous and adapted to the natural filtration of the Brownian motion \cite{chaumont_doney_1999}.  Moreover, $\limsup X(t) = +\infty$ and $\liminf X(t) = -\infty$ \cite{chaumont_doney_2000}.  This last result, together with the main result of this paper, implies that the p-rotor walk is recurrent for any two-sided i.i.d.\ initial rotors and any $0<p<1$.
\end{abstract}

\textit{2010 Mathematics Subject Classification.} 
60G42, 
60F17, 
60J10, 
60J65, 
60K37, 
82C41. 

\textit{Key words and phrases.} 
Abelian network, correlated random walk,  locally Markov walk, perturbed Brownian motion, martingale,
rotor-router model, scaling limit, recurrence.

\section{Introduction}\label{sec:intro}

In a \emph{rotor walk} on a graph, the exits from each vertex follow a prescribed periodic sequence.
In the last decade Propp \cite{propp_lecture}, Cooper and Spencer \cite{CS}, and Holroyd and Propp \cite{holroyd_propp} 
developed close connections 
between the behavior of rotor walk and the first-order properties of random walk.  On finite graphs and on $\Z$, rotor walk approximates the $n$-step distribution, 
stationary distribution, expected hitting times and harmonic measure of random walk to within a bounded additive error.  On other infinite graphs, especially in questions concerning recurrence and transience, rotor walk can have different behavior from random walk \cite{landau_levine, angel_holroyd, angel_holroyd_rec_config, huss_sava_trans_directed_covers, florescu_ganguly_levine_peres_escapes,  huss_muller_sava_gw, friedrich_katzmann_krohmer}.

\old{An argument of Schramm shows 
that rotor walk is ``at least as recurrent'' as random walk \cite{florescu_ganguly_levine_peres_escapes}, but in many instances rotor walk can be 
significantly ``more recurrent'' \cite{angel_holroyd, angel_holroyd_rec_config, huss_sava_trans_directed_covers, huss_muller_sava_gw}.

For example, it is believed that the uniform rotor walk on $\Z^2$ (i.e., the rotor walk with independent and uniform 
initial rotors) visits only $O(n^{2/3})$ distinct sites in $n$ steps
\cite{PDDK,KD,florescu_levine_peres}.
In spite of strong numerical evidence for this conjecture, it remains open even to prove that this number is $o(n)$, or that the uniform rotor walk on $\Z^2$ is recurrent!
The difficulties in approaching this conjecture are of the same type as those encountered in the Lorenz mirror model \cite[\textsection13.3]{grimmett}: all available randomness is present in the initial condition.  }

An interesting question is how to define a modification of rotor walk that approximates well not just the mean, but also the second and higher moments of some observables of random walk. Propp (personal communication) has proposed an approach involving multiple species of walkers. In the current work we explore a rather different approach to this question. We interpolate between rotor and random walk by introducing a parameter $p \in [0,1]$. During one step of the \emph{p-rotor walk}, if the current rotor configuration is $\rho : \Z \to \{-1,+1\}$ and the current location of the walker is $x \in \Z$, then we change the sign of $\rho(x)$ with probability $1-p$, and then move the walker one step in the direction of $\rho(x)$. 

More formally, we define a Markov chain on pairs $(\X_n,\rho_n) \in \Z \times \{-1,+1\}^\Z $ by setting 
\begin{equation}\label{eq:rho_n}
 \rho_{n+1}(x)=
\begin{cases}
 \rho_{n+1}(x)  & \text{for } x\neq \X_n,\\
 B_n\rho_n(\X_n) & \text{for } x= \X_n
\end{cases}
\end{equation}
where $B_0, B_1, \ldots$ are independent with $P(B_n = 1) = p = 1- P(B_n = -1)$, for all $n\in\mathbb{N}$. Then we set
\begin{equation}\label{eq:xn_incr}
 \X_{n+1} = \X_n + \rho_{n+1}(\X_n).
\end{equation}
Here $\rho_n$ represents the rotor configuration and $\X_n$ the location of the 
walker after $n$ steps. The parameter $p$ has the following interpretation: at each time step the rotor at the walker's current location is \textit{broken} and 
fails to flip with probability $p$, independently of the past. Note that if the walker visited $x$ at some previous time, then the rotor $\rho_n(x)$ indicates the direction of the most recent exit from $x$, but it retains no memory of whether it was broken previously.

The p-rotor walk is an example of a \emph{stochastic Abelian network} as proposed in \cite{bond_levine_an1},
moreover it is also a special case of an \emph{excited random walk with Markovian cookie stacks} \cite{kosygina_peterson_2015}. The model studied there does not include p-rotor walk as
a special case due to the ellipticity assumption made in this paper.
The pair $(\X_n,\rho_n)$ is a Markov chain, but $(\X_n)$ itself is not a Markov chain unless $p \in \{1/2, 1\}$. 
If $p=1/2$ then 
$(\X_n)$ is a simple random walk on $\Z$. If $p=1$ then $(\X_n)$ deterministically follows the initial rotors $\rho_0$. If $p=0$ then $(\X_n)$ is a rotor walk in the usual sense.
The aim of the current work is to prove that the p-rotor walk on $\Z$ with two-sided i.i.d.\ configuration, when 
properly rescaled, converges weakly to a doubly-perturbed Brownian motion.

\subsection{Main results}\label{subsec:main_results}

We prove a scaling limit theorem for p-rotor walks $(\X_n)$ with random initial rotor configuration on
$\Z$ as following.  The two-sided initial condition we will consider depends on  parameters $\alpha,\beta \in [0,1]$: the initial rotors $\big(\rho_0(x)\big)_{x\in \Z}$ are independent with
\begin{equation}\label{eq:random_cfg}
 \rho_0(x)=
\begin{cases}
-1 & \text{ with probability } \beta, \text{ if } x < 0\\
 1 & \text{ with probability } 1-\beta, \text{ if } x < 0\\
 -1 & \text{ with probability } 1/2, \text{ if } x = 0\\
 1 & \text{ with probability } 1/2, \text{ if } x = 0\\
 1 & \text{ with probability } \alpha, \text{ if } x > 0\\
 -1 & \text{ with probability } 1-\alpha, \text{ if } x > 0.
\end{cases}
\end{equation}
That is, initially, all rotors on the positive integers point to the right
with probability $\alpha$ and to the left with probability $1-\alpha$. Similarly, on the negative integers, initially all rotors point to the left with probability $\beta$ and to the right with probability $1-\beta$.
We can change any finite number of rotors in the initial configuration \eqref{eq:random_cfg}, and the scaling limit of the p-rotor walk will still be the same. See Remark \ref{rem:in_cfg} for more details.
For every $\alpha,\beta\in [0,1]$, for the configuration \eqref{eq:random_cfg} we shall use the name \textit{$(\alpha,\beta)$-random initial
configuration}. 

For a continuous time process $X(t)$ we denote by 
\begin{equation*}
X^\mathsf{sup}(t) = \sup_{s\leq t} X(s)\quad \text{and by} \quad X^\mathsf{inf}(t) = \inf_{s\leq t} X(s)
\end{equation*}
the running supremum and the infimum of $X(t)$ respectively. Denote by $\big(\B(t)\big)_{t\geq 0}$ the standard Brownian motion started at $0$.

\begin{definition}
\label{def:perturbed_brownian_motion}
A process $\mathcal{X}_{a,b}(t)$ is called an \emph{$(a,b)$-perturbed Brownian motion}
with parameters
$a,b\in\mathbb{R}$, if $\mathcal{X}_{a,b}(t)$ is a solution of the implicit equation
\begin{equation}
\label{eq:perturbed_brownian_motion}
\mathcal{X}_{a,b}(t) = \B(t) + a \mathcal{X}_{a,b}^\mathsf{sup}(t) + b\mathcal{X}_{a,b}^\mathsf{inf}(t)
\end{equation}
for all $t\geq 0$.
\end{definition}
The process $\mathcal{X}_{a,b}(t)$ has been called a \emph{doubly perturbed Brownian motion} \cite{davis:annals, carmona_petit_yor_1998}.
For $a,b\in (-\infty,1)$ equation
\eqref{eq:perturbed_brownian_motion} has a pathwise
unique solution; moreover, the solution is almost surely continuous and is adapted to the natural filtration of the Brownian motion $\B(t)$ \cite[Theorem 2]{chaumont_doney_1999}. 
for additional results in this direction
see also
For other important properties of the doubly perturbed Brownian motion we refer to \cite{chaumont_doney_2000}.
We are now ready to state our main result.

\begin{theorem}\label{thm:scaling_lim}
For all $p\in (0,1)$ and all $\alpha,\beta\in [0,1]$, the p-rotor walk $(\X_n)$ on $\Z$
with $(\alpha,\beta)$-random initial configuration as in \eqref{eq:random_cfg}, after rescaling converges 
weakly to an $(a,b)$-perturbed Brownian motion
\begin{equation*}
\bigg\{ \frac{\X(nt)}{\sqrt{n}},\ t\geq 0 \bigg\}\weaklyto
\bigg\{\sqrt{\frac{1-p}{p}} \mathcal{X}_{a,b}(t),\ t\geq 0 \bigg\} \quad \text{ as } n\to\infty,
\end{equation*}
with
$$a = \frac{\alpha(2p-1)}{p} \quad \text{ and }\quad  b = \frac{\beta(2p-1)}{p}.$$
\end{theorem}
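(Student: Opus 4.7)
The plan is to use the local Markov property together with a structural identity for the rotor configuration to write $\X_n$ as a martingale plus an explicit correction in $M_n := \max_{k \le n} \X_k$ and $m_n := \min_{k \le n} \X_k$, and then pass to the scaling limit via the functional martingale CLT and the Chaumont--Doney pathwise uniqueness theorem.  Starting from the semi-martingale identity $\X_{k+1} - \X_k = B_k\,\rho_k(\X_k)$ with $\E[B_k] = 2p-1$, and summing the conditional drift by vertex (equivalently, applying the Poisson equation to the local two-state exit chain $\eta_i(x) = \rho_0(x)\prod_{j=1}^i B_j(x)$), I would derive
\begin{equation*}
\X_n - \X_0 \;=\; \frac{\mathcal{M}_n}{2(1-p)} \;+\; \frac{2p-1}{2(1-p)}\,\Xi_n,
\end{equation*}
where $\mathcal{M}_n := \sum_{k=0}^{n-1} \bigl(B_k - (2p-1)\bigr)\rho_k(\X_k)$ is a martingale with conditional increment variance $4p(1-p)$ and $\Xi_n := \sum_{x \in \Z}(\rho_0(x) - \rho_n(x))\,\indicator_{\{x\text{ visited by time }n\}}$ records the net rotor flips.

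The technical heart of the proof is a structural identity: for every visited vertex $x \ne \X_n$, the current rotor satisfies $\rho_n(x) = \mathrm{sign}(\X_n - x)$.  Indeed, after the walker's last exit from $x$, in direction $\eta_{N_n(x)}(x)$, the walker lands at $x + \eta_{N_n(x)}(x)$ and, by definition of ``last exit,'' never returns to $x$ through time $n$; one-dimensional connectivity then forces the walker to stay on the $\mathrm{sign}(\eta_{N_n(x)}(x))$-side of $x$, which yields $\rho_n(x) = \eta_{N_n(x)}(x) = \mathrm{sign}(\X_n - x)$.  Summing over visited $x \ne \X_n$ gives $\sum_{x \ne \X_n}\rho_n(x)\,\indicator_{\{x\text{ visited}\}} = (\X_n - m_n) - (M_n - \X_n) = 2\X_n - M_n - m_n$.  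Combined with the strong LLN for the i.i.d.\ initial rotors, which gives $\sum_{x=1}^{M_n}\rho_0(x) = (2\alpha-1)M_n + o(\sqrt n)$ and $\sum_{x=m_n}^{-1}\rho_0(x) = (2\beta-1)m_n + o(\sqrt n)$ almost surely, this yields
\begin{equation*}
\Xi_n \;=\; 2\alpha\,M_n + 2\beta\,m_n - 2\X_n + o(\sqrt n).
\end{equation*}

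Substituting back into the decomposition and solving for $\X_n$---the $-2\X_n$ term on the right moves to the left with coefficient $1 + (2p-1)/(1-p) = p/(1-p)$---gives
\begin{equation*}
\X_n \;=\; \frac{\mathcal{M}_n}{2p} \;+\; \frac{\alpha(2p-1)}{p}\,M_n \;+\; \frac{\beta(2p-1)}{p}\,m_n \;+\; o(\sqrt n).
\end{equation*}
The functional martingale CLT gives $\mathcal{M}_{\lfloor nt\rfloor}/(2p\sqrt n) \weaklyto \sqrt{(1-p)/p}\,\B(t)$ (using the identity $\sqrt{4p(1-p)}/(2p) = \sqrt{(1-p)/p}$), and tightness of $\X_{\lfloor nt\rfloor}/\sqrt n$ follows from Doob's inequality applied to $\mathcal{M}_n$ plus the bound $|M_n - m_n| \le n$.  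Any weak subsequential limit $X(t)$ of $\X_{\lfloor nt\rfloor}/\sqrt n$ therefore satisfies $X = \sqrt{(1-p)/p}\,\B + a X^{\mathsf{sup}} + b X^{\mathsf{inf}}$ with $a,b$ as in the theorem, and since one checks $a, b < 1$ throughout $p \in (0,1)$ and $\alpha, \beta \in [0,1]$, the Chaumont--Doney pathwise uniqueness theorem cited in the excerpt identifies $X$ as $\sqrt{(1-p)/p}\,\mathcal{X}_{a,b}$.  The main obstacle is the joint weak convergence of $(\mathcal{M}_n, M_n, m_n)/\sqrt n$ with the Brownian motion in the martingale CLT correctly coupled to the one appearing in the limit equation; the structural rotor identity in the second paragraph is the crucial insight, since without it a naive stationarity-based estimate of $\Xi_n$ would yield both the wrong Brownian-motion variance and the wrong perturbation coefficients.
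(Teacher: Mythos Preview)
Your overall strategy matches the paper's: decompose $\X_n$ as a martingale plus $a\M_n + b\m_n$, apply the functional martingale CLT, establish tightness, and invoke Chaumont--Doney uniqueness to identify subsequential limits. Your route to the decomposition via the telescoping identity $-\Xi_n = \sum_k(\rho_{k+1}(\X_k)-\rho_k(\X_k)) = \X_n - \sum_k\rho_k(\X_k)$ is a pleasant alternative to the paper's direct computation of the compensator $\E[\Delta_k\mid\F_k]$, and your martingale $\mathcal{M}_n$ has the advantage of \emph{constant} conditional increment variance $4p(1-p)$, so condition~(b) of the martingale CLT is immediate. But there are two genuine gaps.

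First, the claim $\sum_{x=1}^{\M_n}\rho_0(x) = (2\alpha-1)\M_n + o(\sqrt n)$ is not justified. Writing $S^+_N = \sum_{x=1}^N(\rho_0(x)-(2\alpha-1))$, you need $S^+_{\M_n} = o_P(\sqrt n)$. A priori you only know $\M_n \le n$, and $\max_{N\le n}|S^+_N|/\sqrt n$ is $O_P(1)$, not $o_P(1)$. To get $o_P(\sqrt n)$ you need $\M_n/n \to 0$ in probability, which the paper proves separately (Proposition~\ref{prop:max_min_gr}) by comparing the walk on its visited interval to a correlated random walk and showing that the time between successive new extrema dominates. You skip this entirely. (The paper avoids the SLLN error by conditioning on $\F_k = \sigma(\X_0,\ldots,\X_k)$, so the mean of $\rho_0(x)$ enters exactly and the initial-rotor randomness is absorbed into the martingale---at the cost of non-constant conditional variance, which is why the paper \emph{also} needs Proposition~\ref{prop:max_min_gr}, but for the CLT rather than for an error term.)

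Second, and more seriously, your tightness argument does not work. The bound $|\M_n - \m_n| \le n$ gives only $|a\M_n + b\m_n|/\sqrt n \le (|a|+|b|)\sqrt n$, which diverges; for the modulus of continuity it gives $|\M_{j+k}-\M_j|\le k$, hence $\sup_{|s-t|<\delta}|M_n(s)-M_n(t)|\le \delta\sqrt n$, again divergent. Doob's inequality controls only the martingale piece. The paper handles this by using the implicit relation $\X_n = \f_n + a\M_n + b\m_n$ itself (Lemmas~\ref{lem:max_equation}--\ref{lem:max_diff_bound}) to bound the oscillation of $\M_n$ and $\m_n$ by a constant times the oscillation of $\f_n$; this transfers tightness from the martingale part to $\M_n$, $\m_n$ and hence to $X_n$. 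Without such a bound you cannot extract convergent subsequences, and the continuous-mapping step that identifies the limit (the paper applies $\Theta(h)(t)=h(t)-a\,h^{\sup}(t)-b\,h^{\inf}(t)$ to $X_n$) never gets started. The ``joint weak convergence'' issue you flag as the main obstacle is exactly this tightness-plus-continuous-mapping argument, and it is not addressed by what you wrote.
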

Note that
\begin{equation*}
1-a = \frac{\alpha(1-p) + p (1-\alpha)}{p}>0\quad\text{and}\quad 1-b = \frac{\beta(1-p) + p (1-\beta)}{p} > 0,
\end{equation*}
hence $a,b < 1$ for all $p\in (0,1)$ and all $\alpha,\beta\in [0,1]$, which ensures the existence and uniqueness of the solution of the equation \eqref{eq:perturbed_brownian_motion}. Moreover $a$ and $b$ have the same sign: $a,b \geq 0$ if $p\geq 1/2$ and $a,b < 0$ if $p < 1/2$.

Doubly perturbed Brownian motion arises as a weak limit of several other discrete processes:
perturbed random walks \cite{davis:annals}; \textit{pq walks} \cite{davis_bm_rw}; \textit{asymptotically free walks} \cite{toth:rayknight}; and certain \textit{excited walks} \cite{dolgopyat_kosygina_2012}.  It is also a degenerate case of the ``true self-repelling motion'' of T{\'o}th and Werner \cite{toth_werner}.

If we take $\beta=0$ in \eqref{eq:random_cfg}, then all rotors on
the negative integers point initially towards the origin.  In this special case the perturbed Brownian motion $\mathcal{X}_{a,b}$ with $b=0$ has a well-known explicit formula: it is a linear combination of a standard brownian motion $\B(t)$ and its running maximum $\mathcal{M}(t) = \sup_{s \leq t} \B(s)$.

\begin{figure}[t]
\input{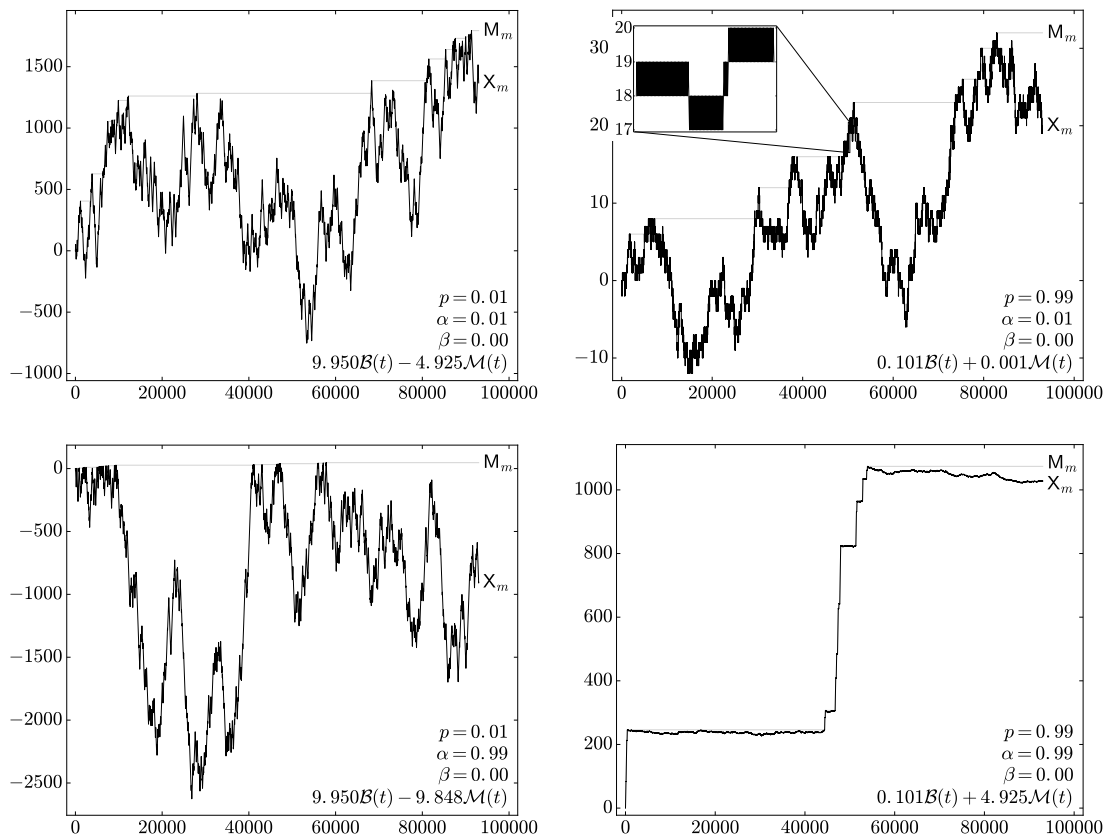}
\caption{\label{fig:path_behavior} Sample paths of the p-rotor walk $\X_n$ for various extreme cases of the parameters $p$ and $\alpha$. In each case $\beta=0$, so the scaling limit of $(\X_n)$ is a linear combination of a Brownian motion $\B(t)$ and its running maximum $\mathcal{M}(t)$.}
\end{figure}

\begin{corollary}\label{cor:one-sided}
For all $p\in (0,1)$ and all $\alpha\in [0,1] $, the rescaled p-rotor walk $(\X_n)$ 
with $(\alpha,0)$-random initial configuration, with $\beta=0$ in \eqref{eq:random_cfg}, converges weakly to a one-sided perturbed Brownian motion
\begin{equation*}
\bigg\{ \frac{\X(nt)}{\sqrt{n}},\ t\geq 0\bigg\}\xrightarrow{\mathcal{D}}
\bigg\{\sqrt{\frac{1-p}{p}}\big(\mathcal{B}(t)+\lambda \mathcal{M}(t)\big),\ t\geq 0 \bigg\}
\quad \text{ as } n\to\infty,
\end{equation*}
where \[ \lambda = \lambda_{p,\alpha}=\frac{\alpha(2p-1)}{\alpha(1-p)+p(1-\alpha)}. \]
\end{corollary}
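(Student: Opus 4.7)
The plan is to derive the corollary by specializing Theorem \ref{thm:scaling_lim} to $\beta = 0$ and then solving the resulting implicit equation in closed form. Setting $\beta = 0$ gives $b = 0$ and $a = \alpha(2p-1)/p$, so Theorem \ref{thm:scaling_lim} immediately yields that $\X(nt)/\sqrt{n}$ converges weakly to $\sqrt{(1-p)/p}\,\mathcal{X}_{a,0}(t)$, where $\mathcal{X}_{a,0}$ is the pathwise unique solution of the one-sided implicit equation $\mathcal{X}_{a,0}(t) = \B(t) + a\,\mathcal{X}_{a,0}^{\mathsf{sup}}(t)$. The real content of the corollary is therefore to identify this process with the explicit linear combination $\B(t) + \lambda \mathcal{M}(t)$ for the appropriate constant $\lambda$.

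My approach is to guess the candidate $Y(t) := \B(t) + \lambda \mathcal{M}(t)$ with $\lambda := a/(1-a)$ and use the pathwise uniqueness from \cite[Theorem 2]{chaumont_doney_1999} to reduce the problem to verifying that $Y$ satisfies the equation. The key step is the identity
\begin{equation*}
Y^{\mathsf{sup}}(t) = (1+\lambda)\,\mathcal{M}(t).
\end{equation*}
Since $a < 1$ throughout the parameter range, we have $1 + \lambda = 1/(1-a) > 0$, and for every $s \leq t$ the estimate
\begin{equation*}
Y(s) = \B(s) + \lambda \mathcal{M}(s) \leq (1+\lambda)\mathcal{M}(s) \leq (1+\lambda)\mathcal{M}(t)
\end{equation*}
follows from $\B(s) \leq \mathcal{M}(s) \leq \mathcal{M}(t)$ together with positivity of $1+\lambda$. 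Equality is attained at any $s^{*} \leq t$ with $\B(s^{*}) = \mathcal{M}(t)$, which confirms the claimed identity. Substituting then gives
\begin{equation*}
\B(t) + a\,Y^{\mathsf{sup}}(t) = \B(t) + a(1+\lambda)\mathcal{M}(t) = \B(t) + \lambda\mathcal{M}(t) = Y(t)
\end{equation*}
by the very definition of $\lambda$, while continuity and adaptedness of $Y$ to the natural filtration of $\B$ are clear, so uniqueness forces $\mathcal{X}_{a,0} = Y$ almost surely.

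To finish, a short algebraic simplification gives $\lambda = a/(1-a) = \alpha(2p-1)/(\alpha(1-p) + p(1-\alpha)) = \lambda_{p,\alpha}$, using the identity $1-a = (\alpha(1-p)+p(1-\alpha))/p$ recorded immediately after Theorem \ref{thm:scaling_lim}. The only delicate point in the whole argument is that the evaluation of $Y^{\mathsf{sup}}$ must be valid simultaneously for $\lambda > 0$ (when $p > 1/2$) and for $\lambda < 0$ (when $p < 1/2$); happily, the single hypothesis $1+\lambda > 0$—which is exactly equivalent to the well-posedness condition $a < 1$—handles both regimes uniformly, so no serious obstacle beyond the main theorem itself should arise.
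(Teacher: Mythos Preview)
Your proof is correct and follows essentially the same route as the paper: specialize Theorem~\ref{thm:scaling_lim} to $\beta=0$ so that $b=0$, then identify the solution of the one-sided equation $X=\B+aX^{\mathsf{sup}}$ with $\B+\tfrac{a}{1-a}\mathcal{M}$ and simplify $\tfrac{a}{1-a}$ to $\lambda_{p,\alpha}$. The only cosmetic difference is that the paper solves the equation forward (taking the supremum on both sides to obtain $(1-a)X^{\mathsf{sup}}=\mathcal{M}$ and substituting back), whereas you verify the candidate and invoke the pathwise uniqueness of \cite{chaumont_doney_1999}; both arguments hinge on the same estimate and the same condition $1-a>0$.
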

The process arising as the scaling limit in this result has the following intuitive 
interpretation: it behaves as a Brownian motion except when it is at its maximum,
when it gets a push up if $\lambda>0$ or a push down if $\lambda<0$. 
By symmetry, we get the same scaling limit in the case $\alpha=0$, with the minimum of the Brownian motion replacing the maximum in Corollary \ref{cor:one-sided}.

The scaling limit of the p-rotor walk (Theorem~\ref{thm:scaling_lim}) along with the fact (proved in \cite{chaumont_doney_2000})
that the doubly perturbed Brownian motion $\mathcal{X}_{a,b}$ satisfies $\limsup_{t\to\infty} \mathcal{X}_{a,b}(t) = +\infty$ and $\liminf_{t\to\infty} \mathcal{X}_{a,b}(t) = -\infty$ almost surely, implies the following.

\begin{corollary}
\label{prop:recurrence}
For all $p\in (0,1)$ and all $\alpha,\beta\in [0,1]$, the  p-rotor walk $(\X_n)$ with
$(\alpha,\beta)$-random initial configuration \eqref{eq:random_cfg} is recurrent on $\Z$.
\end{corollary}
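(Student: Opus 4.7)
The plan is to lift the fluctuation behavior of the doubly perturbed Brownian motion through the scaling limit of Theorem~\ref{thm:scaling_lim} to obtain the analogous fluctuation behavior of $(\X_n)$. Because the walker moves by $\pm 1$ at each step, recurrence on $\Z$ is equivalent to the two almost-sure statements
\[
\Pb\bigl(\sup_n \X_n = +\infty\bigr) = 1 \qquad \text{and} \qquad \Pb\bigl(\inf_n \X_n = -\infty\bigr) = 1:
\]
once both hold, the discrete $\pm 1$ structure forces the walker to cross every integer at arbitrarily large times, so every site is visited infinitely often. The task therefore reduces to deriving these two divergence statements from the scaling limit.

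For the first statement, fix $M \in \N$ and $\epsilon > 0$. By \cite{chaumont_doney_2000} we have $\mathcal{X}_{a,b}^{\sup}(t) \to +\infty$ almost surely, so we may choose $T > 0$ large enough that
\[
\Pb\Bigl( \sqrt{\tfrac{1-p}{p}}\, \mathcal{X}_{a,b}^{\sup}(T) > 1 \Bigr) > 1 - \epsilon.
\]
The functional $x \mapsto \sup_{s \in [0,T]} x(s)$ is continuous on the Skorokhod space at every continuous path, and the limit process $\mathcal{X}_{a,b}$ has almost surely continuous paths, so Theorem~\ref{thm:scaling_lim} together with the continuous mapping theorem yields
\[
\liminf_{n \to \infty} \Pb\Bigl( \max_{k \leq nT} \X_k > \sqrt{n} \Bigr) \;\geq\; \Pb\Bigl( \sqrt{\tfrac{1-p}{p}}\, \mathcal{X}_{a,b}^{\sup}(T) > 1 \Bigr) \;>\; 1 - \epsilon.
\]
For every $n$ with $\sqrt{n} \geq M$, the event on the left is contained in $\{\sup_k \X_k \geq M\}$, so $\Pb(\sup_k \X_k \geq M) \geq 1 - \epsilon$. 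Letting $\epsilon \downarrow 0$ and then intersecting over $M \in \N$ produces $\sup_k \X_k = +\infty$ almost surely. The identical argument, applied to $\mathcal{X}_{a,b}^{\inf}$ in place of $\mathcal{X}_{a,b}^{\sup}$, gives $\inf_k \X_k = -\infty$ almost surely, which by the reduction above finishes the proof.

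The step I would scrutinize most carefully is the application of the continuous mapping theorem: it requires the weak convergence in Theorem~\ref{thm:scaling_lim} to take place in a path-space topology (uniform on compacts, or Skorokhod $J_1$) in which the finite-horizon supremum functional is continuous at every continuous trajectory. Once this is verified from the formulation of Theorem~\ref{thm:scaling_lim}, the remainder is routine bookkeeping: countable intersections promote the family of distributional lower bounds into the required almost-sure statements, and no further use of the non-Markovian dynamics of $(\X_n)$ is needed.
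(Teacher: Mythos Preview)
Your proof is correct and follows essentially the same approach as the paper: invoke \cite{chaumont_doney_2000} for the almost-sure divergence of $\mathcal{X}_{a,b}$, transfer this through the scaling limit of Theorem~\ref{thm:scaling_lim}, and use the nearest-neighbor property to conclude recurrence. Your version spells out the continuous-mapping and Portmanteau steps that the paper leaves implicit; note also that the weak convergence in the paper is stated directly in $\mathcal{C}[0,T]$ with the sup norm (see Notation~\ref{not:lin_inter}), so the supremum functional is continuous everywhere and your caveat about the topology is automatically satisfied.
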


Figure \ref{fig:path_behavior} shows sample paths of the p-rotor walk $(\X_n)$ in the case $\beta = 0$
and various extreme cases of the parameters $p$ and $\alpha$. The parameter values and the formula for the corresponding scaling limit appear in the corner of each picture.
In the pictures on the left ($p=0.01$) the p-rotor walk takes long sequences of steps in the same direction because the rotors are rarely broken. On the
right side ($p=0.99$) the rotors are broken most of the time and the walk spends most of its time trapped in a cycle alternating between two neighboring sites, as seen in the inset in the picture on the upper right.
If $\alpha$ is close to $1$, so that most rotors initially point to the right, then the maximum increases slowly 
if $p$ is small (bottom left).
On the other hand, for $p$ and $\alpha$ both close to $1$ (bottom right), 
when the process forms a new maximum it tends to take many consecutive steps to the right.

In the course of the evolution of the process $(\X_n)$, the rotor configuration $\rho_n$ has always a simple form. Let
\begin{equation}
 \M_n=\max_{k\leq n} \, \X_k \quad \text{and}  \quad \m_n=\min_{k\leq n} \, \X_k
\end{equation}
be the running maximum and running minimum of $(\X_k)$ up to time $n$ respectively.
For all $\mathsf{m}_n \leq x \leq \mathsf{M}_n$, if $x \neq \X_n$ then the rotor $\rho_n(x)$ necessarily points from $x$ in the direction of $\X_n$. Indeed, if $x$ was visited before time $n$ then $\rho_n(x)$ points in the direction of the most recent exit from $x$.
On the other hand, for all $x \not\in \{\mathsf{m}_n,\ldots, \mathsf{M}_n\}$, the rotors remain in their random initial state $\rho_0$, see Figure \ref{fig:environment}.
Hence whenever the process visits a vertex $x$ for the first time, there will be some perturbation if the rotor at
$x$ initially does not point toward the origin.

\begin{figure}[h]
\centering
\begin{tikzpicture}[scale=0.45]
\draw[lightgray] (-12.5,0) -- (12.5,0);

\foreach \x in {-12,...,12} {
	\fill[black] (\x,0) circle (1.6pt);
}

\foreach \x/\y in {-12/-1, -11/1, -10/-1, -9/-1, -8/1, -7/1, -6/-1, -5/1, -4/1, -3/1, -2/1, -1/1, 0/-1, 1/-1, 2/-1, 3/-1, 4/-1, 5/-1, 6/1, 7/-1, 8/1, 9/1, 10/-1, 11/-1, 12/1} {
    \draw[black, -stealth, shorten >=0.2pt, line width = 1] (\x,0) -- (\x + 0.5\y, 0);
}

\draw (4,0.4) -- (4,-0.4);
\draw (-5,0.4) -- (-5,-0.4);

\fill[black] (0,0) circle (4pt);

\coordinate[label=90:{$\M_{n}$}] (Mn) at (4,0.4);
\coordinate[label=90:{$\X_{n}$}] (Xn) at (0,0.4);
\coordinate[label=90:{$\m_{n}$}] (mn) at (-5,0.4);
\end{tikzpicture}
\caption{\label{fig:environment} Each rotor $\rho_n(x)$ is shown by an arrow pointing left or right from $x$, accordingly as $\rho_n(x)$ is $-1$ or $+1$.  The rotors in the visited interval $[\mathsf{m}_n, \mathsf{M}_n]$ always point towards the current position $\X_n$ of the walker.}
\end{figure}
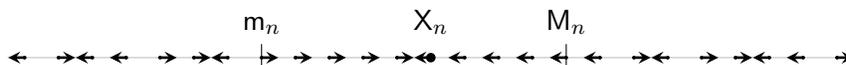

\begin{notation}
\label{not:lin_inter}
Discrete time processes will be denoted $(\X_n), (\Y_n)$, etc., omitting the subscript $n\geq 0$. Square brackets $[ \cdot ]$ denote an event and $\indicator [ \cdot ]$ its indicator.
For all probabilities related to p-rotor walks, we omit the starting point $0$, writing just $\Pb$ instead of $\Pb_0$.
For a discrete time process $(\X_n)$, we denote by $\X(t)$ its linear interpolation to real times $t \in [0,\infty)$
\begin{equation*}
\X(t) = \X_{\floor{t}} + (t - \floor{t})(\X_{\floor{t}+1} - \X_{\floor{t}}).
\end{equation*}
For the scaling limit we look
at the sequence of random continuous functions $\X(kt)/\sqrt{k}$ on the interval $[0,\infty)$. 
Let $\mathcal{C}[0,\infty )$ and $\mathcal{C}[0,T]$ (for $0<T<\infty$) be the spaces of continuous functions $[0,\infty ) \to \mathbb{R}$ and $[0,T] \to \mathbb{R}$, respectively.
We write $\weaklyto$ 
for weak convergence on $\mathcal{C}[0,T]$ with respect to the norm $\| f \| = \sup_{0 \leq t \leq T} |f(t)|$. 
We say that a sequence of random functions $X_k \in \mathcal{C}[0,\infty)$ converges weakly to $X\in\mathcal{C}[0,\infty )$
if the restrictions converge weakly: $X_k|_{[0,T]} \weaklyto X|_{[0,T]}$ in $\mathcal{C}[0,T]$ for all $T > 0$; see \cite[Page 339]{durrett_book}. 
\end{notation}

The rest of the paper is structured as follows.
In Section \ref{sec:sc-lim} we prove the main theorem, which is based on the decomposition of the p-rotor walk path
into a martingale term and a compensator.  The compensator decomposes as a linear combination of three pieces: $\m_n$, $\M_n$, and $\X_n$ itself. 

We apply a version of the functional central limit theorem to show that the martingale term converges weakly to a Brownian motion with a constant factor \emph{different} from $\sqrt{\frac{1-p}{p}}$. The true constant factor $\sqrt{\frac{1-p}{p}}$ appears after we correct for the $\X_n$ term in the compensator.

The proof of the scaling limit for $(\X_n)$ requires the understanding of the scaling limit (and
recurrence) of the \emph{native} case, which is the p-rotor walk with $\alpha=\beta=0$ in the initial
configuration \eqref{eq:random_cfg}. This will be done in Subsection \ref{sec:cor_rw}.
We conclude with several questions and possible extensions of our model in Section \ref{sec:questions}.

\section{Scaling limit}
\label{sec:sc-lim}

We decompose first the p-rotor walk into a martingale and a compensator, and we prove that
$(\X_n)$ does not grow too fast, i.e.~it is tight. 
A similar approach has been used in \cite{dolgopyat_kosygina_2012} to deduce the scaling limit of a recurrent particular
case of an excited random walk on $\Z$. 

Let $\Delta_k = \X_{k+1}-\X_{k}$ for $k\geq 0$ and denote by $\F_k = \sigma(\X_0,\ldots,\X_k)$
the natural filtration of the p-rotor walk $(\X_n)$. Then, for all $n\geq 1$ we can write 
\begin{equation}
\label{eq:martingale_decomposition}
\X_n = \sum_{k=0}^{n-1} \Delta_k = \Y_n + \Zn_n,
\end{equation}
with 
\begin{equation}
\label{eq:Yn_Zn}
\Y_n = \sum_{k=0}^{n-1}\big( \Delta_k - \E[\Delta_k| \F_{k}] \big)
\quad\text{and}\quad \Zn_n = \sum_{k=0}^{n-1} \E[\Delta_k| \F_{k}].
\end{equation}
Let $$\xi_k = \Delta_k - \E[\Delta_k| \F_{k}].$$
Since $\xi_k \in \F_{k+1}$ and
$\E[\xi_k| \F_{k}] = 0$ for all $k\geq 0$, the sequence 
$\big\{\xi_k$, $\F_{k+1}\big\}_{k\geq 0}$ is 
a martingale difference sequence. 
Therefore the process $(\Y_n)$ is a martingale with respect to the filtration $\F_n$.
We will use the following functional limit theorem for martingales, see \textsc{Durrett}
\cite[Theorem 7.4]{durrett_book}.

\begin{theorem}[Martingale central limit theorem]
\label{thm:martingale_clt}
Suppose $\{\xi_k, \F_{k+1}\}_{k\geq 1}$ is a martingale difference sequence and let
$\Y_n = \sum_{1\leq k \leq n} \xi_k$ and $\mathsf{V}_n = \sum_{1\leq k \leq n} \E\big[\xi_k^2|\F_{k}\big]$. If
\begin{enumerate}[(a)]
\item \label{martingal_clt_a} $\displaystyle\frac{1}{n} \sum_{1\leq k\leq n} \E\big[\xi_k^2 \indicator\{\abs{\xi_k} > \epsilon \sqrt{n}\}\big] \to 0$ for all $\epsilon > 0$, as $n\to\infty$
\item \label{martingal_clt_b} $\displaystyle\frac{\mathsf{V}_n}{n} \to \sigma^2 > 0$ in probability, as $n\to\infty$ and
\end{enumerate}
then $\frac{\Y(nt)}{\sqrt{n}}$ converges weakly to a Brownian motion:
\begin{equation*}
\left\{\frac{\Y(nt)}{\sqrt{n}},\, t\in [0,1] \right\} \weaklyto 
\big\{\sigma \B(t),\, t\in [0,1] \big\},\quad \text {as } n \to \infty.
\end{equation*}
\end{theorem}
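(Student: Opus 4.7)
The plan is to prove this functional martingale CLT by the standard two-step program of establishing (i) convergence of the finite-dimensional distributions of $\Y(n\cdot)/\sqrt n$ to those of $\sigma\B$, together with (ii) tightness of the sequence in $\mathcal{C}[0,1]$. Since the linearly interpolated process $\Y(nt)$ differs from the step function $\Y_{\lfloor nt\rfloor}$ by at most $\max_{k\leq n}|\xi_k|/\sqrt n$, which tends to zero in probability thanks to (\ref{martingal_clt_a}), both claims may be verified for the step-function version throughout.

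For the finite-dimensional convergence I would fix $0 = t_0 < t_1 < \cdots < t_m \leq 1$ and prove, via conditional characteristic functions, that the joint law of the block increments $\bigl(\Y_{\lfloor nt_j\rfloor}-\Y_{\lfloor nt_{j-1}\rfloor}\bigr)/\sqrt n$ converges to a centered Gaussian with independent components of variances $\sigma^2(t_j-t_{j-1})$. The key ingredient is the Taylor expansion
\[
\E\!\left[e^{i\theta \xi_k/\sqrt n}\,\bigm|\,\F_k\right] = 1 - \frac{\theta^2}{2n}\,\E[\xi_k^2|\F_k] + R_k,
\]
which, telescoped across $k\in[\lfloor nt_{j-1}\rfloor, \lfloor nt_j\rfloor)$, turns the characteristic function of the $j$-th block (conditional on $\F_{\lfloor nt_{j-1}\rfloor}$) into a product whose main factor is controlled by hypothesis (\ref{martingal_clt_b}), applied along the subsequence $\lfloor nt\rfloor$, and whose remainder is handled by splitting at $\{|\xi_k|\leq\varepsilon\sqrt n\}$ and invoking (\ref{martingal_clt_a}) on the complement. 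Independence between blocks then emerges automatically from the nested conditioning.

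For tightness I would invoke Aldous's criterion, verifying that for every $\varepsilon>0$
\[
\lim_{\delta\downarrow 0}\limsup_{n\to\infty}\,\sup_{\tau}\,\Pb\!\left(\frac{|\Y_{\lfloor n(\tau+\delta)\rfloor}-\Y_{\lfloor n\tau\rfloor}|}{\sqrt n} > \varepsilon\right) = 0,
\]
the supremum being over $\F$-stopping times $\tau\leq 1$. By Doob's $L^2$ inequality the probability is bounded by $\E[\mathsf{V}_{\lfloor n(\tau+\delta)\rfloor}-\mathsf{V}_{\lfloor n\tau\rfloor}]/(\varepsilon^2 n)$, and optional stopping applied to the martingale $\Y_n^2-\mathsf{V}_n$, combined with (\ref{martingal_clt_b}), identifies the limit of this quantity as $\sigma^2\delta/\varepsilon^2$, which vanishes as $\delta\downarrow 0$. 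Finite-dimensional convergence plus tightness yield the asserted weak convergence to $\sigma\B$ in $\mathcal{C}[0,1]$.

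The main obstacle I expect is controlling the remainder $R_k$ uniformly in $k$: the bound $|e^{ix}-1-ix+x^2/2|\leq \min(|x|^2,|x|^3)$ is small only when $|x|=|\theta\xi_k|/\sqrt n$ is small, but the $\xi_k$ need not be uniformly bounded. Truncating at $\{|\xi_k|\leq\varepsilon\sqrt n\}$ splits the contribution into a cubic interior piece governed by (\ref{martingal_clt_b}) and a quadratic tail piece governed precisely by (\ref{martingal_clt_a}); executing this truncation \emph{inside} the conditional expectation, rather than only in the unconditional expectation as in the classical Lindeberg--Feller proof, is the step where the martingale-difference structure is genuinely used.
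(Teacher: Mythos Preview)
The paper does not prove this theorem at all: it is quoted as a black box from the literature, with the sentence immediately preceding the statement reading ``We will use the following functional limit theorem for martingales, see \textsc{Durrett} [Theorem 7.4].'' So there is no proof in the paper to compare your proposal against; the authors simply invoke the result and move on to apply it.

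Your outline is nonetheless the standard route to this kind of result and is broadly sound. One genuine gap to flag in the tightness step: condition~(\ref{martingal_clt_b}) gives only convergence of $\mathsf{V}_n/n$ \emph{in probability}, not in $L^1$, so you cannot directly conclude that $\E[\mathsf{V}_{\lfloor n(\tau+\delta)\rfloor}-\mathsf{V}_{\lfloor n\tau\rfloor}]/n \to \sigma^2\delta$ without an additional uniform integrability argument, which you have not supplied and which need not hold under the stated hypotheses. The usual fixes are either (i) to truncate the $\xi_k$ at level $\varepsilon\sqrt{n}$ first, using~(\ref{martingal_clt_a}) to control the truncation error, so that the truncated predictable variation is bounded and hence uniformly integrable; or (ii) to bypass expectations entirely via a Lenglart-type domination inequality relating $\Pb(\sup_j |\Y_j| > a)$ directly to $\Pb(\mathsf{V}_n > b)$. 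Either route closes the gap, but as written your Doob/optional-stopping argument does not quite go through.
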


In order to prove the scaling limit theorem for $(\X_n)$, 
we first look at the compensator $\mathsf{Z}_n$ in the decomposition \eqref{eq:martingale_decomposition} of $\X_n$.

\begin{proposition}
\label{prop:z_n}
The compensator $\mathsf{Z}_n$ in the decomposition \eqref{eq:martingale_decomposition} of the p-rotor walk is equal to
\begin{equation*}
 \mathsf{Z}_n=(2p-1)(2\beta \m_{n-1}+2\alpha \M_{n-1} - \X_{n-1}),\quad \text{for all } n\geq 1.
\end{equation*}
\end{proposition}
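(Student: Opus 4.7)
My plan is to establish the formula by induction on $n\ge 1$. Since $\Delta_k = \rho_{k+1}(\X_k) = B_k\,\rho_k(\X_k)$ with $B_k$ independent of $\F_k$ and $\E[B_k] = 2p-1$, one has $\E[\Delta_k \mid \F_k] = (2p-1)\,\E[\rho_k(\X_k)\mid\F_k]$, so the problem reduces to computing $\E[\rho_k(\X_k)\mid\F_k]$. The base case is immediate: $\rho_0(0)=\pm 1$ each with probability $\tfrac12$, hence $\Zn_1 = \E[\Delta_0\mid\F_0] = 0$, which matches the claimed formula at $n=1$. For the inductive step, subtracting the claimed expressions at $n$ and $n+1$ and using $\Delta_{n-1} = \X_n - \X_{n-1}$, the task reduces to verifying the one-step identity
\begin{equation*}
\E[\rho_n(\X_n)\mid\F_n] \;=\; 2\beta(\m_n - \m_{n-1}) + 2\alpha(\M_n - \M_{n-1}) - \Delta_{n-1}, \qquad n\ge 1.
\end{equation*}

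I would verify this identity by splitting on three mutually exclusive cases. If $\X_n > \M_{n-1}$ (new maximum) then $\X_n$ is visited for the first time and $\rho_n(\X_n) = \rho_0(\X_n)$; the initial rotor at $\X_n$ has not influenced the trajectory up to time $n$, so it is independent of $\F_n$ with conditional mean $2\alpha-1$. On this event $\M_n-\M_{n-1}=1$, $\m_n=\m_{n-1}$, and the unit-step constraint forces $\Delta_{n-1}=+1$, so the right-hand side is also $2\alpha-1$. The symmetric new-minimum case $\X_n<\m_{n-1}$ yields $1-2\beta$ on both sides. In the remaining revisit case $\m_{n-1}\le\X_n\le\M_{n-1}$ both extremes are preserved, so the right-hand side collapses to $-\Delta_{n-1}$, and it then remains to show that $\rho_n(\X_n)$ is in fact \emph{deterministically} equal to $-\Delta_{n-1}$ (in particular it is $\F_n$-measurable).

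The main obstacle, and the crux of the argument, is this deterministic identity $\rho_n(\X_n) = \X_{n-1} - \X_n$ in the revisit case. Let $j=\max\{k<n:\X_k=\X_n\}$ denote the most recent previous visit to $\X_n$. The rotor at $\X_n$ is not updated during $(j,n]$, so
\begin{equation*}
\rho_n(\X_n) \;=\; \rho_{j+1}(\X_n) \;=\; \X_{j+1}-\X_n \;\in\; \{-1,+1\}.
\end{equation*}
Because the walker takes unit steps and never visits $\X_n$ during the open interval $(j,n)$, its trajectory on that interval lies entirely on one side of $\X_n$; in particular $\X_{n-1}$ sits on the same side as $\X_{j+1}$ at distance one, giving $\X_{n-1}-\X_n = \X_{j+1}-\X_n = \rho_n(\X_n)$, i.e.\ $\rho_n(\X_n) = -\Delta_{n-1}$. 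Substituting this into the revisit case matches the right-hand side of the one-step identity on the nose, and the induction closes.
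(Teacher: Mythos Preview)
Your proof is correct and follows essentially the same route as the paper: both rely on the three-way split (new maximum / new minimum / revisit) and on the key identity $\rho_n(\X_n)=-\Delta_{n-1}$ on the revisit event. The paper carries out the computation as a direct sum, introducing the auxiliary quantity $\mathsf{C}_n=\m_{n-1}+\sum_{k=1}^{n-1}\Delta_{k-1}\indicator\{\m_{k-1}\le\X_k\le\M_{k-1}\}+\M_{n-1}$ and then showing $\mathsf{C}_n=\X_{n-1}$; your induction is simply the discrete derivative of that same sum, and your one-step identity is exactly the increment of the paper's formula. One small point in your favor: you justify $\rho_n(\X_n)=-\Delta_{n-1}$ on the revisit event via the last-visit-time argument, whereas the paper treats this as understood from the earlier discussion of how rotors in the visited interval point toward the walker.
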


\begin{proof}
Recall, from \eqref{eq:xn_incr} that
$\Delta_k=\X_{k+1}-\X_{k}=B_{k}\rho_{k}(\X_{k})$ and $\rho_{k}(\X_{k})\in\F_{k}$ if
$\X_{k}$ has been already visited. If $\X_{k}$ has not been visited before time $k\geq 1$, that is,
if $\X_{k} < \m_{k-1}$ or $\X_k > \M_{k-1}$, then $\rho_k(\X_{k}) = \rho_0(\X_{k})$ and thus
is independent of $\F_k$. The working state of the rotor $B_k$ is independent of $\F_{k}$, and
we have $\mathbb{E}[B_{k}]=2p-1$. It follows that for $k\geq 1$
\begin{align*}
\mathbb{E}[\Delta_k|\F_{k}] & =(2p-1)(1-2\beta)\indicator\{\X_{k}<\m_{k-1}\}\\
 & + \mathbb{E}[\Delta_k|\F_k]\indicator\{\m_{k-1}\leq \X_k \leq \M_{k-1}\}\\
 & + (2p-1)(2\alpha-1)\indicator\{\X_k>\M_{k-1}\}.
\end{align*}
Using the fact that
 $\m_n = -\sum_{k=1}^n \indicator\{\X_{k}<\m_{k-1}\}$ and 
 $\M_{n}=\sum_{k=1}^n \indicator\{\X_{k}>\M_{k-1}\} $,
gives
\begin{align*}
\mathsf{Z}_n=\E[\Delta_0] + \sum_{k=1}^{n-1}\mathbb{E}[\Delta_k|\F_{k}] & =(2p-1)\big\{(2\beta-1)\m_{n-1}+(2\alpha-1)\M_{n-1}\big\} \\ 
& + \sum_{k=1}^{n-1}\mathbb{E}[\Delta_k|\F_k]\indicator\{\m_{k-1}\leq \X_k \leq \M_{k-1}\}.
\end{align*}
On the other hand, on the event $\{\m_{k-1}\leq \X_k \leq \M_{k-1}\}$
\begin{equation*}
 \Delta_{k}=\Delta_{k-1}\indicator\{B_k=-1\}-\Delta_{k-1}\indicator\{B_k=1\},
\end{equation*}
with $\Delta_{k-1} \in \F_k$. It follows that
\begin{align*}
\sum_{k=1}^{n-1} \mathbb{E}[\Delta_k|\F_k]\indicator\{\m_{k-1}\leq \X_k \leq \M_{k-1}\}
=(1-2p) \sum_{k=1}^{n-1}\Delta_{k-1}\indicator\{\m_{k-1}\leq \X_k \leq \M_{k-1}\} .
\end{align*}
Let us denote by $\mathsf{C}_n$ the quantity
\begin{equation*}
\mathsf{C}_n=\m_{n-1}+\sum_{k=1}^{n-1}\Delta_{k-1}\indicator\{\m_{k-1}\leq \X_k \leq \M_{k-1}\} +\M_{n-1}.
\end{equation*}
The compensator $\mathsf{Z}_n$ can then be rewritten as
\begin{equation*}
\mathsf{Z}_n=(2p-1)(2\beta \m_{n-1}-\mathsf{C}_n+2\alpha \M_{n-1} ).
\end{equation*}
It remains to show that $\mathsf{C}_n=\X_{n-1}$, for all $n\geq 1$. This is a straightforward calculation.
We have
\begin{align*}
\X_{n-1}-\mathsf{C}_n& =\sum_{k=0}^{n-2}\Delta_k-\mathsf{C}_n=\sum_{k=1}^{n-1}\Delta_{k-1}-\mathsf{C}_n\\
& =\sum_{k=1}^{n-1}\Delta_{k-1}\indicator\{\m_{k-1}\leq \X_{k}\leq \M_{k-1}\}
+\sum_{k=1}^{n-1}\Delta_{k-1}\indicator\{\X_{k}<\m_{k-1}\}\\
& +\sum_{k=1}^{n-1}\Delta_{k-1}\indicator\{\X_k>\M_{k-1}\}- \mathsf{C}_n.
\end{align*}
On the event $\{\X_{k}<\m_{k-1}\}$, we have that $\X_{k-1}=\m_{k-1}$ and $\Delta_{k-1}=-1$.
On the event $\{\X_{k}>\M_{k-1}\}$, we have that $\X_{k-1}=\M_{k-1}$ and $\Delta_{k-1}=1$.
Therefore
\begin{align*}
 \X_{n-1}-\mathsf{C}_n=-\sum_{k=1}^{n-1}\indicator\{\X_{k}<\m_{k-1}\}+
 \sum_{k=1}^{n-1}\indicator\{\X_k>\M_{k-1}\}-\m_{n-1}-\M_{n-1}=0,
\end{align*}
which completes the proof.
\end{proof}
\begin{proposition}
\label{prop:xn_decomposition}
 For all $p\in(0,1)$ and all $\alpha,\beta\in[0,1]$, the p-rotor walk
 $(\X_n)$ with $(\alpha,\beta)$-random initial configuration $\rho_0$ as in  \eqref{eq:random_cfg}
 satisfies:
 \begin{equation}
  \label{eq:xn_decomposition}
  \X_n = \f_n + a \M_n + b \m_n, \text{ for all } n\geq 1,
 \end{equation}
with
\begin{equation}
 a = \frac{\alpha(2p-1)}{p} \quad \text{and} \quad b=\frac{\beta(2p-1)}{p}
\end{equation}
and $\f_n$ given by
  \begin{equation}
  \label{eq:f_n}
  \f_n = \frac{1}{2p}\big(\Y_{n+1} - \Delta_n\big).
  \end{equation}
\end{proposition}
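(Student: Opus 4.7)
The plan is to derive the identity as a direct algebraic consequence of Proposition~\ref{prop:z_n}, by applying the martingale decomposition \eqref{eq:martingale_decomposition} not at time $n$ but at time $n+1$, so that the maximum and minimum appearing in the compensator are exactly $\M_n$ and $\m_n$ rather than $\M_{n-1}$ and $\m_{n-1}$. The shift by one step is what produces the $\X_n$ on the right-hand side (rather than $\X_{n-1}$) and therefore allows us to absorb a self-referential $\X_n$ term into the coefficient, which is the mechanism behind the appearance of the factor $\frac{1}{2p}$ in $\f_n$ and of the renormalized coefficients $a = \alpha(2p-1)/p$, $b=\beta(2p-1)/p$ (rather than $2\alpha(2p-1)$ and $2\beta(2p-1)$ as in Proposition~\ref{prop:z_n}).

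Concretely, I would proceed as follows. Writing \eqref{eq:martingale_decomposition} at index $n+1$ gives $\X_{n+1} = \Y_{n+1} + \Zn_{n+1}$, and since $\Delta_n = \X_{n+1}-\X_n$ this rearranges to
\begin{equation*}
\Y_{n+1} - \Delta_n = \X_n - \Zn_{n+1}.
\end{equation*}
Then I would substitute the formula from Proposition~\ref{prop:z_n}, applied at time $n+1$, namely
\begin{equation*}
\Zn_{n+1} = (2p-1)\bigl(2\beta\m_n + 2\alpha\M_n - \X_n\bigr),
\end{equation*}
which yields
\begin{equation*}
\Y_{n+1} - \Delta_n \;=\; \X_n + (2p-1)\X_n - (2p-1)(2\beta\m_n + 2\alpha\M_n) \;=\; 2p\,\X_n - (2p-1)\bigl(2\beta\m_n + 2\alpha\M_n\bigr).
\end{equation*}
Dividing both sides by $2p$ and rearranging gives exactly \eqref{eq:xn_decomposition} with the stated values of $a$, $b$, and $\f_n$.

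There is essentially no obstacle here beyond bookkeeping: the statement is a reformulation of Proposition~\ref{prop:z_n} designed to put the identity into a form suitable for the functional CLT argument, where $\f_n$ (up to the small correction $\Delta_n/(2p)$) is the rescaled martingale and $a\M_n + b\m_n$ is the perturbation term. The only point worth double-checking is that applying Proposition~\ref{prop:z_n} at time $n+1$ is legitimate for all $n\geq 0$, which is immediate since that proposition holds for all $n\geq 1$.
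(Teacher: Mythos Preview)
Your proposal is correct and is essentially identical to the paper's proof: the paper writes the decomposition at time $n$, uses $\X_n=\X_{n-1}+\Delta_{n-1}$, solves for $2p\X_{n-1}$, and then reindexes, which is exactly your argument with the shift performed at the outset rather than at the end.
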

\begin{proof}
From \eqref{eq:martingale_decomposition} and Proposition \ref{prop:z_n} we get
\begin{equation*}
\X_n=\Y_n+ (2p-1)(2\beta \m_{n-1}+2\alpha \M_{n-1} -\X_{n-1}),
\end{equation*}
which together with $\X_n=\X_{n-1}+\Delta_{n-1}$ gives the following representation
of the p-rotor walk in terms of its minimum and maximum
\begin{equation*}
 2p\X_{n-1}=\Y_n-\Delta_{n-1}+2\alpha(2p-1)\M_{n-1}+2\beta(2p-1)\m_{n-1}.
\end{equation*}
Dividing by $2p$ and reindexing gives the claim.
\end{proof}

We focus next our attention on the martingale term $\Y_n$ in the decomposition \eqref{eq:martingale_decomposition} of the
p-rotor walk. First we consider briefly the special case of $\alpha = \beta = 0$, which is particularly simple to understand
and whose properties will be used in the behavior of the general case.

\subsection{Native environment}
\label{sec:cor_rw}
In the native case $\alpha = \beta = 0$ our initial rotor configuration has the form
\begin{equation}\label{eq:yn_rot_cfg}
 \rho_0(x) = \begin{cases}
            -1 & \text{ for } x > 0, \\
            -1 \quad\text{with probability 1/2} & \text{ for } x = 0,\\
            1 \quad\text{with probability 1/2} & \text{ for } x = 0,\\
             1 & \text{ for } x < 0,
             \end{cases}
\end{equation}
Denote by $(\mathsf{U_n})$ the p-rotor walk started with this initial configuration.
We shall call $(\mathsf{U_n})$ the \emph{native p-rotor walk}.
As mentioned in the introduction, in the previously visited region $\{\mathsf{m}_n,\ldots,\mathsf{M}_n\}$,
the configuration $\rho_n$ of the p-rotor walk $(\X_n)$ points in the direction of the current position. Therefore,
in the visited region the p-rotor walk $(\X_n)$ behaves exactly like $(\mathsf{U}_n)$. The process $(\mathsf{U}_n)$
is an easy special case of a \emph{correlated random walk} which has been studied in greater generality in \cite{enriquez:corr_rw}.

\begin{definition}
\label{d.correlatedRW}
A \emph{correlated random walk} on $\Z$ with persistence $q\in(0,1)$ is a nearest
neighbour random walk, such that with probability $q$ the direction of a step is the same as
the direction of the previous step. If $q=1/2$, then it is a simple random walk. 
\end{definition}

Since all rotors always point to the current position $\mathsf{U}_n$ of the walker,
the rotor $\rho_n(\mathsf{U_n})$ points towards the previous position $\mathsf{U}_{n-1}$.
Thus the direction of movement changes only if the rotor at time $n$ is broken (i.e. $B_n=1$), which
happens with probability $p$. Thus $(\mathsf{U}_n)$ is a correlated random walk with
persistence $1-p$. 

It is easy to see that $(\mathsf{U}_n)$ when properly rescaled converges weakly to a
Brownian motion. We give a quick proof of this fact for completeness.
\begin{proposition}
\label{prop:undisturbed_clt}
For every $p\in(0,1)$, the native p-rotor walk $(\mathsf{U}_n)$ with initial configuration as in \eqref{eq:yn_rot_cfg}
when rescaled by $\sqrt{n}$, converges weakly on $\mathcal{C}[0,1]$ to a Brownian motion:
\begin{equation*}
\left\{\frac{\mathsf{U}(nt)}{\sqrt{n}}, t\in [0,1]\right\} \weaklyto \left\{\sqrt{\frac{1-p}{p}} 
\B(t),\, t\in [0,1]\right\},\qquad\text{as } n\to\infty.
\end{equation*}
\end{proposition}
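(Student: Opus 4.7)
The plan is to apply the martingale central limit theorem (Theorem \ref{thm:martingale_clt}) to the martingale part $\Y_n$ associated with the native walk and then transfer the result to $\mathsf{U}_n$ through the identity supplied by Proposition \ref{prop:xn_decomposition}. Specializing that proposition to $\alpha = \beta = 0$ gives $\mathsf{U}_n = \f_n = \frac{1}{2p}(\Y_{n+1} - \Delta_n)$. Since $|\Delta_n| \leq 1$, the rescaled processes $\mathsf{U}(nt)/\sqrt{n}$ and $\Y(nt)/(2p\sqrt{n})$ differ uniformly on $[0,1]$ by $o(1)$, so it suffices to prove $\Y(nt)/\sqrt{n} \weaklyto 2\sqrt{p(1-p)}\,\B(t)$.

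The key input is an explicit formula for the conditional variance of the martingale differences $\xi_k = \Delta_k - \E[\Delta_k \mid \F_k]$. The discussion preceding the proposition already identifies $(\mathsf{U}_n)$ as a correlated random walk: in the native environment the rotor $\rho_n(\mathsf{U}_n)$ always points toward the previous position, i.e.\ $\rho_n(\mathsf{U}_n) = -\Delta_{n-1}$, so
\begin{equation*}
\Delta_n = B_n \rho_n(\mathsf{U}_n) = -B_n \Delta_{n-1}, \qquad n \geq 1.
\end{equation*}
Combined with $\E[B_n] = 2p-1$, this yields $\E[\Delta_n \mid \F_n] = (1-2p)\Delta_{n-1}$ and $\xi_n = \Delta_n - (1-2p)\Delta_{n-1}$. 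Expanding the square and using $\Delta_n^2 = \Delta_{n-1}^2 = 1$, I obtain
\begin{equation*}
\E[\xi_n^2 \mid \F_n] = 1 - 2(1-2p)^2 + (1-2p)^2 = 1 - (1-2p)^2 = 4p(1-p),
\end{equation*}
a deterministic constant. Hence condition (b) of Theorem \ref{thm:martingale_clt} holds with $\sigma^2 = 4p(1-p)$.

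The Lindeberg-type condition (a) is immediate because $|\xi_k| \leq 2$: for any fixed $\epsilon > 0$ the indicator $\indicator\{|\xi_k| > \epsilon\sqrt{n}\}$ vanishes identically once $n$ is large. Theorem \ref{thm:martingale_clt} therefore delivers $\Y(nt)/\sqrt{n} \weaklyto 2\sqrt{p(1-p)}\,\B(t)$, and combining this with the identification above produces
\begin{equation*}
\frac{\mathsf{U}(nt)}{\sqrt{n}} \weaklyto \frac{2\sqrt{p(1-p)}}{2p}\,\B(t) = \sqrt{\frac{1-p}{p}}\,\B(t),
\end{equation*}
which is exactly the claimed scaling limit. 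There is no genuine obstacle in this argument; the only point that needs a moment's care is checking that $\rho_n(\mathsf{U}_n) = -\Delta_{n-1}$ holds at \emph{every} step, including the first visit to a new maximum or minimum, which is precisely where the native initial configuration (all rotors outside the visited interval pointing toward the origin) is used.
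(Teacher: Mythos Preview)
Your proof is correct and follows essentially the same approach as the paper: specialize Proposition~\ref{prop:xn_decomposition} to $\alpha=\beta=0$ to reduce to the martingale $\Y_n$, then verify the hypotheses of Theorem~\ref{thm:martingale_clt} by computing $\E[\xi_k^2\mid\F_k]=4p(1-p)$. The only cosmetic difference is that the paper computes this conditional variance via $\E[\Delta_k\mid\F_k]^2=(2p-1)^2\rho_k(\X_k)^2=(2p-1)^2$ (using that $\rho_k(\X_k)\in\F_k$ in the native environment), whereas you make the correlated-walk identity $\rho_n(\mathsf{U}_n)=-\Delta_{n-1}$ explicit first; both lead to the same constant by the same arithmetic.
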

\begin{proof}
From Proposition \ref{prop:xn_decomposition} with $\alpha=\beta=0$ we get that for all $n\geq 1$
\begin{equation*}
\mathsf{U}_n = \frac{1}{2p}\big(\Y_{n+1} - \Delta_n\big).
\end{equation*}
Since $\Delta_n\in\{-1,+1\}$, the process $(2p\mathsf{U}_n)$ has the same scaling limit as $\Y_n = \sum_{k=0}^{n-1}\xi_k$,
with $\xi_k = \Delta_k - \E[\Delta_k|\F_k]$. Because $\Y_n$ is a martingale, we can apply Theorem \ref{thm:martingale_clt}.
The first condition of Theorem \ref{thm:martingale_clt} is satisfied since $\xi_k$ is uniformly bounded for all $k\geq 0$.
Thus, we only have to show convergence 
of the quadratic variation process $(\mathsf{V}_n)$. Using  the fact that $\mathsf{U}_n$ is a nearest 
neighbour walk, the following holds
\begin{equation}\label{eq:vn_variation}
\begin{aligned}
\mathsf{V}_n  = & \sum_{k=1}^{n}\mathbb{E}[\xi_k^2|\F_{k}]=\sum_{k=1}^{n}
\mathbb{E}\big[(\Delta_k-\mathbb{E}[\Delta_k|\F_{k}])^2|\F_{k}\big] \\
=& \sum_{k=1}^{n}\mathbb{E}[(\Delta_k^2-2\Delta_k\mathbb{E}[\Delta_k|\F_{k}]+\mathbb{E}[\Delta_k|\F_{k}]^2)|\F_{k}\mathbb]\\
= & n-\sum_{k=1}^{n}\mathbb{E}[\Delta_k|\F_{k}]^2,
\end{aligned}
\end{equation}
On the other hand, from
equation \eqref{eq:xn_incr} we have the equality
$\Delta_k=\X_{k+1}-\X_{k}=B_{k}\rho_{k}(\X_{k})$ where $\rho_{k}(\X_{k})\in\F_{k}$
and $B_{k}$ independent of $\F_{k}$ with $\mathbb{E}[B_{k}]=(2p-1)$. Hence
\begin{equation*}
\mathbb{E}[\Delta_k|\F_{k}]^2 = (2p-1)^2\rho_k(\X_k)^2 = (2p-1)^2.
\end{equation*}
Then $$\frac{\mathsf{V}_n}{n} = 4p(1-p),$$ from which the claim immediately follows.
\end{proof}

\subsection{General environment}

We now treat the general case of an $(\alpha,\beta)$-random initial configuration with $\alpha,\beta \in [0,1]$.
In order to check that $(\Y_n)$ as defined in \eqref{eq:Yn_Zn} satisfies the assumptions 
of the martingale central limit theorem, we first prove that the running maximum and minimum of $(\X_n)$ have sublinear growth.  

The argument we will use is similar to the one used to prove \cite[Lemma 3.2]{davis:annals}. The main idea is that $(\X_n)$ performs correlated random walk as long as it remains in previously visited territory, so if $\M_n - \m_n \geq L$ then the time to form a new extremum is stochastically at least the time for a correlated random walk to exit an interval of length $L$.

\begin{proposition}
\label{prop:max_min_gr}
 Let $(\X_n)$ be a p-rotor walk with $(\alpha, \beta)$-random initial configuration $\rho_0$ 
 as in \eqref{eq:random_cfg}. For every $p\in(0,1)$ and $\alpha,\beta\in[0,1]$, 
 \begin{equation*}
  \frac{\M_n}{n} \to 0\quad\text{and}\quad \frac{\m_n}{n} \to 0
 \end{equation*}
 in probability, as $n\to\infty$.
\end{proposition}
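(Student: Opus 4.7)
The plan is to show that the range $R_n := \M_n - \m_n$ grows at most like $n^{1/3}$ (and in particular sublinearly) by exploiting the observation from Section~\ref{sec:cor_rw}: whenever the walker is in previously visited territory, the rotors there all point toward it, so its dynamics coincide with those of the native p-rotor walk $(\mathsf{U}_n)$, which is a correlated random walk with persistence $1-p$ and hence diffusive.

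For $k \geq 1$ let $\tau_k := \min\{n \geq 0 : R_n = k\}$ and $\sigma_k := \tau_k - \tau_{k-1}$. Between times $\tau_{k-1}$ and $\tau_k$ the walker is confined to the visited interval $[\m_{\tau_{k-1}}, \M_{\tau_{k-1}}]$ of $k$ sites and performs the native p-rotor walk, started at one endpoint with the rotor beneath it pointing outward. With probability $p$ the rotor fails to flip at step $\tau_{k-1}+1$ and $\sigma_k = 1$; with probability $1-p$ the walker steps inward and $\sigma_k$ is then the first exit time of a correlated random walk (persistence $1-p$) from an interval of length $k$ started one step inside the boundary. By the invariance principle for correlated random walk---which is Proposition~\ref{prop:undisturbed_clt} applied to $(\mathsf{U}_n)$---this exit time divided by $k^2$ converges in distribution to the exit time of a Brownian motion from a unit interval. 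In particular, there exist absolute constants $c, c' > 0$ such that
\begin{equation*}
\Pb\bigl[\sigma_k \geq c k^2 \,\bigm|\, \F_{\tau_{k-1}}\bigr] \geq q := (1-p)c' > 0
\end{equation*}
for all $k$ sufficiently large.

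Setting $I_k := \indicator\{\sigma_k \geq c k^2\}$, the conditional lower bound above lets us couple the $I_k$ from below by i.i.d.\ Bernoulli$(q)$ random variables, so $\sum_{k=\lceil L/2\rceil}^L I_k$ stochastically dominates $\operatorname{Binomial}(\lceil L/2\rceil, q)$ and, by a Chernoff estimate, exceeds $qL/4$ with probability tending to $1$ as $L\to\infty$. Each contributing $\sigma_k$ is at least $c(L/2)^2$, whence $\tau_L \geq (qL/4)\cdot c(L/2)^2 \geq CL^3$ with probability tending to $1$, for some $C > 0$. Since $\{R_n \geq \epsilon n\} \subseteq \{\tau_{\lceil \epsilon n\rceil} \leq n\}$ and $C(\epsilon n)^3 \gg n$ for large $n$, we conclude $\Pb[R_n \geq \epsilon n] \to 0$. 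Combined with $0 \leq \M_n \leq R_n$ and $0 \leq -\m_n \leq R_n$, this yields $\M_n/n \to 0$ and $\m_n/n \to 0$ in probability.

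The main obstacle is the quadratic lower bound on $\sigma_k$. One must verify that the rotor configuration inside $[\m_{\tau_{k-1}}, \M_{\tau_{k-1}}]$ at the stopping time $\tau_{k-1}$ genuinely matches the native p-rotor walk started at an endpoint (every interior rotor points toward the current position of the walker, by the discussion around Figure~\ref{fig:environment}), and that the invariance principle of Proposition~\ref{prop:undisturbed_clt} can be upgraded to a uniform-in-$k$ lower bound on the tail of the rescaled exit time from an interval, which is the quantitative input needed for the Binomial domination step.
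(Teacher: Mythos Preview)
Your overall strategy---reducing to the correlated random walk inside the visited interval---matches the paper's, but the key quantitative claim is false. You assert
\[
\Pb\bigl[\sigma_k \ge c k^{2}\,\bigm|\,\F_{\tau_{k-1}}\bigr]\ge q>0 \quad\text{for all large }k,
\]
appealing to the invariance principle. But after one inward step the walker is at distance $1$ from an endpoint of an interval of $k$ sites, and under the diffusive rescaling this starting point $1/k$ collapses to the boundary. Proposition~\ref{prop:undisturbed_clt} therefore gives that $\sigma_k/k^{2}$ converges in distribution to the exit time of Brownian motion from $[0,1]$ \emph{started at $0$}, which is identically zero. Hence $\Pb[\sigma_k\ge ck^2]\to 0$, not $\ge q$, and the Binomial domination never gets started; the ``upgrade'' you flag in your last paragraph is simply unavailable. (Even the weaker bound with $ck$ in place of $ck^{2}$ fails: the exit time is dominated by the hitting time of the near boundary for the unrestricted walk, whose tail decays like $t^{-1/2}$, so $\Pb[\sigma_k\ge ck]\to 0$ as well.) A separate minor inaccuracy: the rotor at the freshly visited site $\X_{\tau_{k-1}}$ is still in its random initial state $\rho_0$, so the probability that $\sigma_k=1$ is $\alpha p+(1-\alpha)(1-p)$ (or the $\beta$-analogue at a new minimum), not $p$.

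The paper circumvents the boundary-start problem by fixing a large threshold $L$ and working with \emph{expectations} rather than tail bounds. Once the range first reaches $L$, it alternates between stopping times $\tau_{2k}$ (first return to the open interior after an extremum) and $\tau_{2k+1}$ (next new extremum). Each interior excursion $\tau_{2k+1}-\tau_{2k}$ stochastically dominates the exit time of a $(1-p)$-correlated random walk from $\{0,L\}$ started at $1$, whose expectation $E_L$ tends to $\infty$ with $L$; each boundary run $\tau_{2k+2}-\tau_{2k+1}$ is geometric with mean bounded by a constant $C$. A law-of-large-numbers comparison then yields $\limsup_n(\M_n-\m_n)/n\le C/E_L$ for every $L$, and letting $L\to\infty$ finishes. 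The essential difference from your argument is that the interval length is held fixed at $L$ and only expectations are compared, so no uniform-in-$k$ tail bound across growing intervals is ever needed.
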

\begin{proof}
Fix $L>1$ and let
\begin{equation*}
 \tau_1=\inf\{n>0: \M_n-\m_n=L\}
\end{equation*}
be the first time when $(\X_n)$ has visited $L+1$ distinct points. For $k\geq 1$ consider the sequence
of stopping times
\begin{align*}
 \tau_{2k}&=\inf \big\{i>\tau_{2k-1}:\ \m_i < \X_i < \M_i\big\},\\
 \tau_{2k+1}&=\inf \big\{i>\tau_{2k}:\ \X_i<\m_{i-1} \text{ or }\X_i>\M_{i-1}\big\}.
\end{align*}
For each $k\geq 1$ the p-rotor walk reaches a previously unvisited vertex at time $\tau_{2k+1}$. It follows that
$\rho_{\tau_{2k+1}}(\X_{\tau_{2k+1}}) = \rho_{0}(\X_{\tau_{2k+1}})$ and $\X_{\tau_{2k+1}} \in \{\m_{\tau_{2k+1}}, \M_{\tau_{2k+1}}\}$. The conditional distribution
of $\tau_{2k+2} - \tau_{2k+1}$ given $\F_{\tau_{2k+1}}$ on the event $[\X_{\tau_{2k+1}} = \M_{\tau_{2k+1}}]$
is the geometric distribution with parameter $\alpha(1-p) + p(1-\alpha)$, since it represents the number of consecutive increases
of the maximum before changing direction. The process $(\X_n)$ stops increasing the maximum if the rotor at the current position points to the right in the 
initial configuration (with probability $\alpha$) and 
it is working (with probability  $1-p$) or if it points to the left (with probability $1-\alpha$) and it is broken
(with probability  $p$). Similarly the conditional distribution
of $\tau_{2k+2} - \tau_{2k+1}$ given $\F_{\tau_{2k+1}}$ on the event $[\X_{\tau_{2k+1}} = \m_{\tau_{2k+1}}]$
is the geometric distribution with parameter $\beta(1-p) + p(1-\beta)$. It follows that 
\begin{equation}
\label{eq:c}
\E[\tau_{2k+2}-\tau_{2k+1}] \leq C := \max\left\{\frac{1}{\alpha(1-p) + p(1-\alpha)}, \, \frac{1}{\beta(1-p) + p(1-\beta)} \right\}.
\end{equation}

For $k \geq 1$, in order to estimate the conditional distribution of $\tau_{2k+1}-\tau_{2k}$ given $\F_{\tau_{2k}}$, note that at time $\tau_{2k}$, the p-rotor walk is at distance $1$
from either its current maximum or the current minimum, and $\M_{\tau_{2k}}-\m_{\tau_{2k}} \geq L$.
Inside the already visited interval $I_k := \{\m_{\tau_{2k}},\ldots,\M_{\tau_{2k}}\}$
the rotors to the left of $\X_{\tau_{2k}}$ point right and the rotors to the right of $\X_{\tau_{2k}}$ point left.
These rotors coincide with the native environment  \eqref{eq:yn_rot_cfg} with the origin shifted to $\X_{\tau_{2k}}$. 
Therefore, starting at time $\tau_{2k}$ until the time $\tau_{2k+1}$ when it exits the interval $I_k$, the p-rotor walk is a correlated random walk with persistence $1-p$ (Definition~\ref{d.correlatedRW}). 
Thus, the conditional distribution of $\tau_{2k+1}-\tau_{2k}$ given 
$\F_{\tau_{2k}}$ is stochastically no smaller than the distribution of the time it takes a $(1-p)$-correlated
random walk started at $1$ to first visit the set $\{0,L\}$.
Denote by $E_L$ the expected hitting time of the set $\{0,L\}$ for a $(1-p)$-correlated random started at $1$, where the first step goes to $0$ with probability $p$ and to $2$ with probability $1-p$. From the law of large numbers 
\begin{equation*}
 \limsup_{n\to\infty}\dfrac{\sum_{k=1}^{n}(\tau_{2k}-\tau_{2k-1})}{\sum_{k=1}^{n-1}(\tau_{2k+1}-\tau_{2k})}\leq \frac{C}{E_L},
\end{equation*}
with $C$ given in \eqref{eq:c}.
On the other hand
\begin{equation*}
 \sum_{k=1}^n(\tau_{2k}-\tau_{2k-1})=\sum_{i=\tau_1}^{\tau_{2n}}\indicator\{\X_i<\m_{i-1} \text{ or } \X_i>\M_{i-1}\}
\end{equation*}
and 
\begin{equation*}
\sum_{k=1}^{n-1}(\tau_{2k+1}-\tau_{2k})\leq \tau_{2n}.
\end{equation*}
Then we have
\begin{align*}
\limsup_{n\to\infty}\frac{1}{n}(\M_n - \m_n) &= \limsup_{n\to\infty}\frac{1}{n}\left(L + \sum_{i=\tau_1}^{n} \indicator\{\X_i<\m_{i-1} \text{ or } \X_i>\M_{i-1}\}\right)\\
 &\leq \limsup_{n\to\infty}\frac{1}{\tau_{2n}}\left(\sum_{i=\tau_1}^{\tau_{2n}} \indicator\{\X_i<\m_{i-1} \text{ or } \X_i>\M_{i-1}\}\right)\\
 & \leq \limsup_{n\to\infty}\frac{\sum_{k=1}^n(\tau_{2k}-\tau_{2k-1})}{\sum_{k=1}^{n-1}(\tau_{2k+1}-\tau_{2k})}\leq \frac{C}{E_L}.
\end{align*}
Proposition \ref{prop:undisturbed_clt} (which also implies the recurrence of the correlated random
walk) together with the Portmanteau theorem 
yields that $\sup_{L> 1}E_L=\infty$, which gives
\begin{equation*}
\limsup_{n\to\infty}\frac{1}{n}(\M_n - \m_n) = 0.
\end{equation*}
Since $\M_n \leq \M_n - \m_n$ and $\abs{\m_n} \leq \M_n - \m_n$, the proposition follows.
\end{proof}

Now we obtain the scaling limit of the martingale portion $(\Y_n)$ of the $p$-rotor walk. Note that the constant factor in front of the Brownian motion here is different from the $\sqrt{\frac{1-p}{p}}$ we are ultimately aiming for in the scaling limit of $(\X_n)$.

\begin{theorem}
Let $(\Y_n)$ be the martingale defined in \eqref{eq:Yn_Zn}. Then on the space $\mathcal{C}[0,1]$
\label{thm:yn_convergence}
\begin{equation*}
\left\{\frac{\Y(nt)}{\sqrt{n}},\, t\in [0,1] \right\}
\weaklyto \left\{2\sqrt{p(1-p)}\B(t),\,t\in [0,1]\right\}
\quad\text{as } n\to\infty.
\end{equation*}
\end{theorem}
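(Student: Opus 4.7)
The plan is to apply the martingale CLT (Theorem \ref{thm:martingale_clt}) to the martingale difference sequence $\xi_k = \Delta_k - \E[\Delta_k\mid\F_k]$, with target variance $\sigma^2 = 4p(1-p)$. All the needed ingredients have already been set up: the explicit formulas for $\E[\Delta_k\mid\F_k]$ derived during the proof of Proposition \ref{prop:z_n}, together with the sublinear growth of the extrema from Proposition \ref{prop:max_min_gr}.

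I would first dispose of the Lindeberg-type condition (\ref{martingal_clt_a}). Since $\Delta_k\in\{-1,+1\}$ and $\E[\Delta_k\mid\F_k]\in[-1,1]$, the differences satisfy $|\xi_k|\leq 2$ uniformly. Hence for any fixed $\epsilon>0$, once $n$ is large enough that $\epsilon\sqrt n>2$, the indicator $\indicator\{|\xi_k|>\epsilon\sqrt n\}$ vanishes identically, making condition (\ref{martingal_clt_a}) trivial.

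The substantive step is condition (\ref{martingal_clt_b}), the convergence of the quadratic variation. Using $\Delta_k^2=1$ one has
\begin{equation*}
\mathsf{V}_n \;=\; \sum_{k=1}^{n}\E[\xi_k^2\mid\F_k] \;=\; n - \sum_{k=1}^{n}\E[\Delta_k\mid\F_k]^2.
\end{equation*}
I would split the second sum according to the three cases already analyzed in the proof of Proposition \ref{prop:z_n}. On the event $[\m_{k-1}\leq \X_k\leq \M_{k-1}]$, the rotor $\rho_k(\X_k)\in\F_k$ is deterministic with $|\rho_k(\X_k)|=1$, so $\E[\Delta_k\mid\F_k]^2=(2p-1)^2$. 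On the complementary events $[\X_k>\M_{k-1}]$ and $[\X_k<\m_{k-1}]$, the rotor is fresh with mean $2\alpha-1$ or $1-2\beta$, giving the bounded values $(2p-1)^2(2\alpha-1)^2$ and $(2p-1)^2(2\beta-1)^2$, both at most $1$. Thus $\sum_{k=1}^n\E[\Delta_k\mid\F_k]^2$ differs from $n(2p-1)^2$ by at most a bounded multiple of the number of $k\leq n$ at which a previously unvisited site is entered, and this number equals $\M_n-\m_n+O(1)$.

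By Proposition \ref{prop:max_min_gr}, $(\M_n-\m_n)/n\to 0$ in probability, so
\begin{equation*}
\frac{1}{n}\sum_{k=1}^{n}\E[\Delta_k\mid\F_k]^2 \;\xrightarrow{\Pb}\; (2p-1)^2,
\qquad
\frac{\mathsf{V}_n}{n}\;\xrightarrow{\Pb}\;1-(2p-1)^2=4p(1-p),
\end{equation*}
verifying (\ref{martingal_clt_b}) with $\sigma=2\sqrt{p(1-p)}$. Theorem \ref{thm:martingale_clt} then delivers the claimed weak convergence on $\mathcal{C}[0,1]$. There is no serious obstacle once Proposition \ref{prop:max_min_gr} is in hand; the only delicate point is being careful that the ``fresh rotor'' contributions to $\E[\Delta_k\mid\F_k]^2$ are controlled by the number of first-visit times, which is exactly what the sublinearity of $\M_n-\m_n$ provides.
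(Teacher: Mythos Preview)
Your proposal is correct and follows essentially the same approach as the paper: apply the martingale CLT, dispose of the Lindeberg condition by boundedness of $\xi_k$, compute $\mathsf{V}_n = n - \sum_{k}\E[\Delta_k\mid\F_k]^2$ via the three-case split from the proof of Proposition~\ref{prop:z_n}, and use Proposition~\ref{prop:max_min_gr} to kill the fresh-rotor contributions. The only cosmetic difference is that the paper writes out the exact formula for $\mathsf{V}_n/n$ in terms of $\m_n/n$ and $\M_n/n$ before passing to the limit, whereas you bound the discrepancy from $n(2p-1)^2$ by a multiple of $\M_n-\m_n$; both arrive at the same conclusion in the same way.
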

\begin{proof}
We check the conditions of the martingale central limit theorem from Theorem \ref{thm:martingale_clt}. As in the proof of Proposition \ref{prop:undisturbed_clt} the first
condition of Theorem \ref{thm:martingale_clt} is satisfied since $\xi_k$ is bounded. 
Similarly to \eqref{eq:vn_variation} the following equality holds
\begin{equation*}
\mathsf{V}_n = n-\sum_{k=1}^{n}\mathbb{E}[\Delta_k|\F_{k}]^2.
\end{equation*}
We use once again that $\Delta_k=\X_{k+1}-\X_{k}=B_{k}\rho_{k}(\X_{k})$, where $B_{k}$ is independent of $\F_{k}$ with $\mathbb{E}[B_{k}]=(2p-1)$.
On the event $[\m_{k-1}\leq \X_k \leq \M_{k-1}]$ the rotor $\rho_k(\X_k)$ is $\F_k$-measurable, since it points into the direction of the last
exit from $\X_k$. On the other hand, on the event $[\m_{k-1} > \X_k \text{ or }  \M_{k-1}<\X_k]$ the rotor $\rho_k(\X_k) = \rho_0(\X_k)$
is still in its initial state, which is independent of $\F_k$. Thus
\begin{equation}\label{eq:delta_square}
\begin{aligned}
\mathbb{E}[\Delta_k|\F_{k}]^2 & = (2p-1)^2 (1-2\beta)^2\indicator\{\X_{k}<\m_{k-1}\}\\
& + (2p-1)^2\rho_k(\X_k)^2\indicator\{\m_{k-1}\leq \X_k \leq \M_{k-1}\}\\
& + (2p-1)^2(2\alpha-1)^2\indicator\{\X_k>\M_{k-1}\}.
\end{aligned}
\end{equation}
Moreover, because
$
\m_n = -\sum_{k=1}^n \indicator\{\X_{k}<\m_{k-1}\}$,
$\M_n = \sum_{k=1}^n \indicator\{\X_{k}>\M_{k-1}\}$
and 
\begin{equation*}
 \sum_{k=1}^n\indicator\{\m_{k-1}\leq \X_k \leq \M_{k-1}\}=n-\M_n+\m_n,
\end{equation*}
equations \eqref{eq:vn_variation} and \eqref{eq:delta_square} imply that
\begin{equation*}
 \frac{\mathsf{V}_n}{n}=1-(2p-1)^2\bigg\{-(1-2\beta)^2\frac{\m_n}{n}
 +\frac{n}{n}-\frac{\M_n}{n}+\frac{\m_n}{n}
 +(2\alpha-1)^2\frac{\M_n}{n}\bigg\}.
\end{equation*}
This together with Proposition \ref{prop:max_min_gr} finally yields
\begin{equation*}
\frac{\mathsf{V}_n}{n}\to 1-(2p-1)^2=4p(1-p)>0. 
\end{equation*}
Since all conditions from Theorem \ref{thm:martingale_clt} are satisfied, with $\sigma=2\sqrt{p(1-p)}$,
the claim follows.
\end{proof}

Recall that for all $n\geq 1$,
the p-rotor walk satisfies an equation of the form
\begin{equation}
\label{eq:xn_equation}
\X_n = \f_n + a \M_n + b \m_n,
\end{equation}
with $a,b < 1$ and $\f_n$ given in Proposition \ref{prop:xn_decomposition}. 
\begin{lemma}\label{lem:conv_fn}
Let $\f_n$ as defined in Proposition \ref{prop:xn_decomposition}. Then $\f_n$,
when rescaled by $\sqrt{n}$ converges weakly on $\mathcal{C}[0,1]$ to a Brownian motion:
\begin{equation*}
 \left\{\frac{\f(nt)}{\sqrt{n}},\, t\in[0,1] \right\}
\weaklyto \left\{\sqrt{\frac{1-p}{p}}\B(t),\,t\in [0,1]\right\}
\quad\text{as } n\to\infty.
\end{equation*}
\end{lemma}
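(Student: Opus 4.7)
The plan is to reduce this lemma directly to Theorem \ref{thm:yn_convergence}, since Proposition \ref{prop:xn_decomposition} already gives us the explicit formula
\[ \f_n = \frac{1}{2p}\bigl(\Y_{n+1} - \Delta_n\bigr). \]
The constant $\frac{1}{2p}$ is exactly what converts the scaling constant $2\sqrt{p(1-p)}$ from Theorem \ref{thm:yn_convergence} into the desired $\sqrt{(1-p)/p}$, so the heart of the matter is showing that the extra pieces (the offset $n+1$ and the bounded correction $\Delta_n$) contribute nothing to the scaling limit.

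First I would observe that $|\Delta_n| \le 1$ for all $n$, so the process $\{\Delta_{\lfloor nt\rfloor}/\sqrt{n}\}_{t\in[0,1]}$ (and its linear interpolation) converges to $0$ uniformly on $[0,1]$. Therefore it suffices to show that
\[ \left\{\frac{1}{2p\sqrt{n}}\,\Y(nt+1),\ t\in[0,1]\right\} \weaklyto \left\{\sqrt{\tfrac{1-p}{p}}\,\B(t),\ t\in[0,1]\right\}. \]
By Theorem \ref{thm:yn_convergence} we already have $\Y(nt)/\sqrt{n} \weaklyto 2\sqrt{p(1-p)}\,\B(t)$ on $\mathcal{C}[0,1]$. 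Multiplying by the deterministic constant $1/(2p)$ and applying the continuous mapping theorem converts the limiting factor to $\sqrt{(1-p)/p}$, so the only thing left is to handle the time shift by $1$.

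For the time shift, I would note that $|\Y(nt+1) - \Y(nt)| \le |\xi_{\lfloor nt\rfloor+1}| \le 2$ by the definition of the linear interpolation and the fact that the martingale increments $\xi_k$ are bounded by $2$. Consequently
\[ \sup_{t\in[0,1]} \frac{1}{\sqrt{n}}\bigl|\Y(nt+1)-\Y(nt)\bigr| \le \frac{2}{\sqrt{n}} \longrightarrow 0, \]
so by Slutsky's lemma the shifted process has the same weak limit as $\Y(nt)/\sqrt{n}$. Combining this with the $\Delta_n$ estimate and the factor $1/(2p)$ gives the claim.

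I do not expect any genuine obstacle here: once Theorem \ref{thm:yn_convergence} is in hand, this lemma is essentially a Slutsky-type corollary. The only minor bookkeeping concern is making the linear-interpolation identity $\f(nt) = \frac{1}{2p}\bigl(\Y(nt+1) - \Delta(nt)\bigr) + O(1/\sqrt{n})$ rigorous for non-integer $nt$, but this follows immediately from the linearity of the interpolation operator in the defining identity $\f_n = \frac{1}{2p}(\Y_{n+1}-\Delta_n)$.
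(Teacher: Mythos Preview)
Your proposal is correct and follows essentially the same approach as the paper: the paper's proof simply notes that $\f_n$ is $\frac{1}{2p}\Y_n$ plus a bounded quantity, so it has the same scaling limit as $\frac{1}{2p}\Y_n$, and then invokes Theorem~\ref{thm:yn_convergence}. Your version just spells out the Slutsky-type details (the bounded $\Delta_n$, the time shift by $1$) that the paper leaves implicit.
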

\begin{proof}
 Because $\f_n$ is $\Y_n$ plus a bounded quantity, rescaled by $1/2p$, this implies
 that $\f_n$ has the same scaling limit as $\frac{1}{2p}\Y_n$. This together with 
 Theorem \ref{thm:yn_convergence} gives the claim.
\end{proof}

In the remainder of this section we will show that this
implies the weak convergences of $\X(nt)/\sqrt{n}$ to a doubly perturbed Brownian motion.
We shall use the following identities from \cite[page 243]{carmona_petit_yor_1998} characterizing the maximum and minimum of a solution to an
equation of the form \eqref{eq:xn_equation}. We include the proof for completeness.
\begin{lemma}
\label{lem:max_equation}
Let $\M_n$ and $\m_n$ be the running maximum and minimum of a process $(\X_n)$ satisfying \eqref{eq:xn_equation}.
Then
\begin{align*}
\M_n = \frac{1}{1-a} \max_{k\leq n} \left(\f_k + \frac{b}{1-b} g_k\right)
\quad\text{and}\quad
\m_n = \frac{1}{1-b} \min_{k\leq n} \left(\f_k + \frac{a}{1-a} G_k\right),
\end{align*}
where $g_k = \min_{l\leq k}\big(\f_l + a\M_l\big)$ and $G_k =  \max_{l\leq k} 
\big(\f_l + b \m_l\big)$,
\end{lemma}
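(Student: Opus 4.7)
The plan is to reduce the problem to a one-sided perturbation identity applied in each direction, and then combine the two identities.

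First I would group the terms as $\X_k = \tilde{\f}_k + a\M_k$, where $\tilde{\f}_k := \f_k + b\m_k$. My first goal is then to prove the one-sided identity
\begin{equation*}
(1-a)\M_n = \max_{k\le n}\tilde{\f}_k.
\end{equation*}
For the upper bound on the right-hand side: for every $k\le n$ we have $\X_k \le \M_k \le \M_n$, and since $a<1$ (so $1-a\ge 0$),
\begin{equation*}
\tilde{\f}_k = \X_k - a\M_k \le \M_k - a\M_k = (1-a)\M_k \le (1-a)\M_n.
\end{equation*}
For the lower bound, pick $k^\ast \le n$ with $\X_{k^\ast}=\M_n$; then $\M_{k^\ast}=\M_n$, so
\begin{equation*}
\tilde{\f}_{k^\ast}=\M_n-a\M_n=(1-a)\M_n,
\end{equation*}
giving equality.

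By the exact symmetric argument (swap max with min, $a$ with $b$, the roles of $\f_k+b\m_k$ with $\f_k+a\M_k$), the identity
\begin{equation*}
(1-b)\m_k = \min_{l\le k}\bigl(\f_l+a\M_l\bigr) = g_k
\end{equation*}
holds for every $k$, not just $k=n$. In particular $\m_k = g_k/(1-b)$ for all $k\le n$.

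To finish, I would substitute this expression for $\m_k$ into $\tilde{\f}_k=\f_k+b\m_k$ inside the first identity, which immediately yields
\begin{equation*}
\M_n = \frac{1}{1-a}\max_{k\le n}\Bigl(\f_k+\tfrac{b}{1-b}\,g_k\Bigr),
\end{equation*}
and the analogous substitution (using the already-proved formula for $\M_k$ in terms of $G_k$) gives the dual formula for $\m_n$. There is no real obstacle here beyond bookkeeping; the only subtle point is making sure that the hypothesis $a<1$ (respectively $b<1$) is used correctly to preserve the direction of the inequality when multiplying by $1-a$, which is exactly the moment where the pathwise bound $\X_k\le \M_k$ is leveraged.
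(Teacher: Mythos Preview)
Your proof is correct and follows essentially the same route as the paper: rewrite $\X_k - a\M_k = \f_k + b\m_k$, take the maximum (respectively minimum) over $k\le n$ to obtain $(1-a)\M_n = \max_{k\le n}(\f_k + b\m_k)$ and $(1-b)\m_k = g_k$, and then substitute one into the other. The only difference is that you spell out the two-sided inequality justifying $\max_{k\le n}(\X_k - a\M_k) = (1-a)\M_n$, which the paper leaves implicit.
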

\begin{proof}
From \eqref{eq:xn_decomposition} we have $\X_n - a \M_n = \f_n + b \m_n$. Taking the maximum 
over $n$ on both sides gives
$$(1-a) \M_n = \max_{k\leq n} \big(\f_k + b\m_n\big).$$
Similarly 
$$(1-b) \m_n = \min_{k\leq n}\big(\f_k + a \M_n\big).$$
Solving for the running maximum $\M_n$ and for the running minimum $\m_n$
gives the claim.
\end{proof}

We shall also use the following easy inequality.
\begin{proposition}
\label{prop:max_diff}
Let $(x_k)_{k\geq 0}$ be a sequence of real numbers. Then for all $n,j\in\mathbb{N}_0$,
\begin{equation}
\label{eq:max_diff}
\max_{k\leq n+j} x_k - \max_{k\leq j} x_k \leq \max_{k\leq n} \big(x_{j+k} - x_j\big).
\end{equation}
\end{proposition}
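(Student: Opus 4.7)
The plan is to do a short case analysis based on where the maximum on the longer interval $\{0,1,\ldots,n+j\}$ is attained. Let $k^\star \in \{0,1,\ldots,n+j\}$ denote an index achieving $\max_{k\leq n+j} x_k$.

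First I would dispose of the easy case $k^\star \leq j$. In that case $\max_{k\leq n+j} x_k = x_{k^\star} \leq \max_{k\leq j} x_k$, so the left-hand side of \eqref{eq:max_diff} is non-positive, while the right-hand side is at least $x_{j+0} - x_j = 0$ (taking $k=0$, which is allowed since $n \in \mathbb{N}_0$ and the max over $k\leq n$ includes $k=0$). Hence the inequality holds trivially.

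Next I would handle the case $k^\star > j$. Write $k^\star = j + k_0$ with $1 \leq k_0 \leq n$. Since $j$ itself lies in the range of the shorter max, we have $x_j \leq \max_{k\leq j} x_k$. Therefore
\begin{equation*}
\max_{k\leq n+j} x_k - \max_{k\leq j} x_k \;=\; x_{j+k_0} - \max_{k\leq j} x_k \;\leq\; x_{j+k_0} - x_j \;\leq\; \max_{k\leq n}\bigl(x_{j+k} - x_j\bigr),
\end{equation*}
where the last inequality is just the observation that $k_0$ is one of the indices over which the maximum on the right-hand side is taken.

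There is no substantive obstacle here: the inequality is essentially the remark that any new maximum produced after time $j$ can be bounded above by the increment from the reference value $x_j$, and $x_j$ is no larger than the running maximum at time $j$. The only small thing to be careful about is that the right-hand side uses $k \leq n$ (including $k=0$), which is what makes the degenerate case go through.
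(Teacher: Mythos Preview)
Your proof is correct and essentially identical to the paper's: both split into the case where the left-hand side vanishes (equivalently, the maximizer $k^\star$ over $\{0,\ldots,n+j\}$ lies in $\{0,\ldots,j\}$) and the case where it is strictly positive, then use $\max_{k\leq j} x_k \geq x_j$ to conclude. The only cosmetic difference is that you phrase the case split via the location of $k^\star$ rather than via the sign of the left-hand side.
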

\begin{proof}
If the left hand side of \eqref{eq:max_diff} is equal to zero, the statement is trivially true.
Now assume that $\max_{k\leq n+j} x_k > \max_{k\leq j} x_k$. It follows that
\begin{equation*}
\max_{k\leq n+j} x_k = \max_{j \leq k \leq n+j} x_k = \max_{k \leq n} x_{k+j}.
\end{equation*}
Hence 
$$\max_{k\leq n+j} x_k - \max_{k\leq j} x_k \leq \max_{k \leq n} x_{k+j} - x_j =
\max_{k \leq n} \big(x_{k+j} - x_j\big).$$
\end{proof}

\begin{lemma}
\label{lem:max_diff_bound}
There exists a constant $C > 0$ such that
\begin{equation*}
\abs{\M_{j+n} - \M_j} \leq C \max_{k\leq n} \abs{\f_{j+k}-\f_j} \quad \text{and}\quad \abs{\m_{j+n} - \m_j} \leq C \max_{k\leq n} \abs{\f_{j+k}-\f_j},
\end{equation*}
for all $j,n\geq 0$.
\end{lemma}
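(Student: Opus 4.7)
The plan is to apply Proposition \ref{prop:max_diff} to the explicit formulas for $\M_n$ and $\m_n$ from Lemma \ref{lem:max_equation}, and then to solve a small linear system in the two unknowns $U := \M_{j+n} - \M_j \geq 0$ and $L := \m_j - \m_{j+n} \geq 0$, with right-hand side controlled by $M := \max_{k \leq n}|\f_{j+k} - \f_j|$. Only the bound on $U$ will be discussed; the bound on $L$ follows by an identical argument with the roles of $(a, \M_n, g_k)$ and $(b, \m_n, G_k)$ interchanged.

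Starting from $(1-a)\M_n = \max_{k \leq n}\bigl(\f_k + \tfrac{b}{1-b}\, g_k\bigr)$, Proposition \ref{prop:max_diff} yields
\[
(1-a)\,U \leq \max_{k \leq n}\Bigl[(\f_{j+k} - \f_j) + \tfrac{b}{1-b}(g_{j+k} - g_j)\Bigr].
\]
The sequence $g_k = \min_{l \leq k}(\f_l + a\M_l)$ is non-increasing, so $g_{j+k} - g_j \leq 0$; moreover, whenever $g_{j+k} < g_j$, the minimizer $l^{\ast}$ defining $g_{j+k}$ must lie in $(j, j+n]$, and comparing with the trial point $l = j$ gives
\[
g_{j+k} - g_j \geq (\f_{l^{\ast}} - \f_j) + a(\M_{l^{\ast}} - \M_j) \geq -M - a^{-} U,
\]
with $a^{-} := \max(-a, 0)$, using that $\M_{l^{\ast}} - \M_j \in [0, U]$. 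Substituting the two estimates according to the sign of $b/(1-b)$ produces either $(1-a)U \leq M$ (when $b \geq 0$, the $g$ contribution is non-positive) or the inequality
\[
\bigl[(1-a)(1-b) - a^{-}|b|\bigr]\,U \leq (1-2b)\,M
\]
(when $b < 0$). The symmetric analysis starting from $(1-b)\m_n = \min_{k\leq n}\bigl(\f_k + \tfrac{a}{1-a} G_k\bigr)$ yields the companion bound.

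The main obstacle is that $|a|$ and $|b|$ can be arbitrarily large --- for instance $|a| = \alpha(1-2p)/p \to \infty$ as $p \to 0$ with $\alpha > 0$ --- so the naive estimate $U \leq M + |a|U + |b|L$ obtained by applying the triangle inequality directly to $\X_n = \f_n + a\M_n + b\m_n$ breaks down as soon as $|a|\,|b| \geq 1$. The virtue of the inequalities above is that in the delicate case $a, b < 0$ one computes
\[
(1-a)(1-b) - |a|\,|b| = 1 - a - b > 1 > 0,
\]
so the coefficient of $U$ remains strictly positive; in the two mixed-sign cases one of the cross coefficients vanishes and the system becomes triangular. Dividing by the (positive) coefficient in each case yields an explicit constant $C = C(a,b)$ for which both $U \leq CM$ and $L \leq CM$, which is the claim.
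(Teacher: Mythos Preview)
Your argument is correct and follows the same skeleton as the paper's proof: both start from the formula $(1-a)\M_n = \max_{k\leq n}\bigl(\f_k + \tfrac{b}{1-b}g_k\bigr)$ of Lemma~\ref{lem:max_equation}, apply Proposition~\ref{prop:max_diff}, and then estimate the increment of $g$. The difference lies in that last estimate. The paper exploits the feature, specific to the p-rotor walk, that $a$ and $b$ always have the same sign (each is $\tfrac{2p-1}{p}$ times a nonnegative number): in the case where the $g$-increment cannot simply be dropped, it invokes the sign of $a$ to discard the contribution $a(\M_j-\M_{j+l})$ outright, arriving at the clean constant $C=\max\bigl\{\tfrac{1}{1-a},\tfrac{1}{1-b},\tfrac{1}{(1-a)(1-b)}\bigr\}$. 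You instead retain that contribution as $a^{-}U$ on the right-hand side and absorb it back into the left, the key computation being $(1-a)(1-b)-|a|\,|b|=1-a-b>1$ when $a,b<0$. Your route yields a somewhat larger constant but works for arbitrary $a,b<1$, with no same-sign hypothesis; in particular it is robust against the sign bookkeeping that the paper's shortcut requires. One cosmetic remark: the inequality you actually derive for $U$ involves only $U$ (and symmetrically for $L$), so the ``small linear system'' you announce at the outset is already diagonal and nothing needs to be solved.
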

\begin{proof}
By Lemma \ref{lem:max_equation} and Proposition \ref{prop:max_diff}
\begin{align}
\label{eq:max_diff_bound}
\begin{aligned}
\M_{j+n} - \M_j &= \frac{1}{1-a}\left\{ \max_{k\leq j+n}\left(\f_k + \frac{b}{1-b}g_k\right) - 
                                                                \max_{k\leq j}\left(\f_k + \frac{b}{1-b}g_k\right)
                                                        \right\} \\
                      &\leq \frac{1}{1-a} \max_{k\leq n} \left\{(\f_{j+k} - \f_j) + \frac{b}{1-b}(g_j - g_{j+k})\right\}.
\end{aligned}
\end{align}
We shall distinguish two cases. Let first $b \leq 0$. Since 
$$g_j - g_{j+k} = \min_{l\leq j}\big(\f_l + a\M_l\big) - \min_{l\leq j+k}\big(\f_l + a\M_l\big) \geq 0,$$
we get the bound
\begin{equation*}
\M_{j+n} - \M_j \leq \frac{1}{1-a} \max_{k\leq n} \big(\f_{j+k} - \f_j\big).
\end{equation*}
If $b > 0$ we can apply again Proposition \ref{prop:max_diff} to obtain
\begin{align*}
g_j - g_{j+k} &= \min_{l\leq j}\big(\f_l + a\M_l\big) - \min_{l\leq j+k}\big(\f_l + a\M_l\big) \\
              &= \max_{l\leq j+k}\big(-\f_l - a\M_l\big) - \max_{l\leq j}\big(-\f_l - a\M_l\big) \\
              &\leq \max_{l\leq k} \big((\f_j-\f_{j+l}) + a(\M_j - \M_{j+l})\big) \\
              &\leq \max_{l\leq k} \big(\f_j-\f_{j+l}\big),
\end{align*}
where the last inequality follows from the fact $\M_j - \M_{j+l}\leq 0$ and that $b >0$ implies that also $a>0$.  
Together with \eqref{eq:max_diff_bound} this gives 
\begin{align*}
\M_{j+n} - \M_j &\leq \frac{1}{1-a} \max_{k\leq n} \left\{(\f_{j+k} - \f_j) + \frac{b}{1-b}\max_{l\leq k} \big(\f_j-\f_{j+l}\big)\right\} \\
                &\leq \frac{1}{1-a} \max_{k\leq n} \left\{\abs{\f_{j+k} - \f_j} + \frac{b}{1-b}\max_{l\leq k} \abs{\f_{j+l}-\f_j}\right\} \\
                &\leq \frac{1}{1-a} \left\{ \max_{k\leq n} \abs{\f_{j+k} - \f_j} + \frac{b}{1-b} \max_{k\leq n} \abs{\f_{j+k} - \f_j} \right\}\\
                &= \frac{1}{(1-a)(1-b)} \max_{k\leq n} \abs{\f_{j+k} - \f_j}.
\end{align*}
The upper bound for the differences of the minimum follows from the same argument with the roles of $a$ and $b$ exchanged. By setting
$$C = \max\left\{\frac{1}{1-a}, \frac{1}{1-b}, \frac{1}{(1-a)(1-b)}\right\}$$ the claim follows.
\end{proof}

\begin{proposition}\label{prop:Mm_tight}
For $n\geq 1$ let $M_n(t) = \frac{\M(nt)}{\sqrt{n}}$ and $m_n(t) = \frac{\m(nt)}{\sqrt{n}}$ be the processes obtained by linearly interpolating and rescaling the running maximum and the running minimum of $(\X_n)$, respectively.
Then $(M_n)_{n\geq 1}$ and $(m_n)_{n\geq 1}$ are tight sequences in $\mathcal{C}[0,1]$.
\end{proposition}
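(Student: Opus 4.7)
The plan is to use the standard modulus-of-continuity characterization of tightness in $\mathcal{C}[0,1]$: a sequence $(F_n)$ of random continuous functions with $F_n(0)$ tight is tight if and only if for every $\epsilon > 0$,
\begin{equation*}
\lim_{\delta \downarrow 0} \limsup_{n \to \infty} \Pb\bigl[w(F_n,\delta) > \epsilon\bigr] = 0,
\end{equation*}
where $w(f,\delta) = \sup_{|t-s|\leq \delta,\, s,t \in [0,1]} |f(t)-f(s)|$. For the sequences $(M_n)$ and $(m_n)$ the condition at $0$ is trivially satisfied since $\M_0 = \m_0 = 0$, so the whole task reduces to controlling the moduli of continuity.

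The key input will be Lemma \ref{lem:max_diff_bound}, which states that there is a constant $C > 0$ such that
\begin{equation*}
|\M_{j+n} - \M_j| \leq C \max_{k \leq n} |\f_{j+k} - \f_j| \quad\text{and}\quad |\m_{j+n} - \m_j| \leq C \max_{k \leq n} |\f_{j+k} - \f_j|.
\end{equation*}
Dividing by $\sqrt{n}$ and passing to the linearly interpolated processes, this translates into a pointwise bound of the form $|M_n(t) - M_n(s)| \leq C\, w(F_n, t-s) + o(1)$, where $F_n(t) = \f(nt)/\sqrt{n}$ and the $o(1)$ term accounts for the linear-interpolation error at the endpoints (which is controlled by $|\M_{j+1} - \M_j| \leq 1$, and similarly for $\f$, $\m$, all of which contribute at most $O(1/\sqrt{n})$). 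Taking the supremum over $|t-s| \leq \delta$ yields
\begin{equation*}
w(M_n,\delta) \leq C\, w(F_n,\delta) + O(1/\sqrt{n}),
\end{equation*}
and the analogous bound for $w(m_n, \delta)$.

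By Lemma \ref{lem:conv_fn}, the sequence $(F_n)$ converges weakly in $\mathcal{C}[0,1]$ to $\sqrt{(1-p)/p}\,\B$, so in particular $(F_n)$ is tight. Therefore the modulus-of-continuity condition holds for $F_n$: for every $\epsilon > 0$,
\begin{equation*}
\lim_{\delta \downarrow 0} \limsup_{n \to \infty} \Pb[w(F_n, \delta) > \epsilon] = 0.
\end{equation*}
Combined with the bounds above, the same condition holds for $M_n$ and $m_n$ (with $\epsilon$ replaced by $\epsilon/C$ and absorbing the vanishing $O(1/\sqrt{n})$ term into an additional $\epsilon/2$ once $n$ is large enough), which yields the tightness of $(M_n)_{n\geq 1}$ and $(m_n)_{n\geq 1}$.

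The only subtlety, which I expect to be a quick bookkeeping step rather than a real obstacle, is the interplay between the discrete bound in Lemma \ref{lem:max_diff_bound} and the continuous-time modulus of continuity of the linear interpolations; this is handled by the uniform bound $\max(|\M_{n+1}-\M_n|, |\m_{n+1}-\m_n|, |\f_{n+1}-\f_n|) \leq O(1)$, which makes the interpolation error contribute at most $O(1/\sqrt{n})$.
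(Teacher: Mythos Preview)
Your proposal is correct and follows essentially the same approach as the paper: both use the modulus-of-continuity criterion for tightness (with the trivial condition at $t=0$), invoke Lemma~\ref{lem:max_diff_bound} to bound increments of $\M$ and $\m$ by increments of $\f$, and conclude via the tightness of $F_n(t)=\f(nt)/\sqrt{n}$ provided by Lemma~\ref{lem:conv_fn}. The only difference is cosmetic: the paper ``undoes the rescaling'' and works directly with the discrete bound, while you keep the continuous-time formulation and track the $O(1/\sqrt{n})$ interpolation error explicitly.
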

\begin{proof}
We show the tightness only for the rescaled maximum $M_n$. By symmetry, the same argument also applies to $m_n(t)$. Since $M_n(0) = 0$ for all $n\geq 1$ by Theorem
7.3 of \cite{billingsley_2nd_edition} we only need to show that for all $\epsilon > 0$
\begin{equation*}
\lim_{\delta\to 0}\limsup_{n\to\infty} \Pb\left[\sup_{\abs{s-t} < \delta} \abs{M_n(s) - M_n(t)} \geq \epsilon\right] = 0.
\end{equation*}
Undoing the rescaling, this is equivalent to showing that
\begin{equation*}
\lim_{\delta\to 0}\limsup_{n\to\infty} \Pb\left[\max_{k < n\delta} \abs{\M_{j+k} - \M_j} \geq \epsilon\sqrt{n}\right] = 0,
\end{equation*}
for all $j\geq 0$ and all $\epsilon > 0$.

By Lemma \ref{lem:max_diff_bound} we have $\max_{k < n\delta} \abs{\M_{j+k} - \M_j} \leq C \max_{k\leq n\delta} \abs{\f_{j+k} - \f_j}$.
Thus
\begin{align*}
\lim_{\delta\to 0}\limsup_{n\to\infty} \Pb\left[\max_{k < n\delta} \abs{\M_{j+k} - \M_j} \geq \epsilon\sqrt{n}\right]\leq
\lim_{\delta\to 0}\limsup_{n\to\infty} \Pb\left[\max_{k < n\delta} \abs{\f_{j+k} - \f_j} \geq \frac{\epsilon}{C}\sqrt{n}\right].
\end{align*}
By Lemma~\ref{lem:conv_fn} the right side is zero: $\frac{\f(nt)}{\sqrt{n}}$ converges weakly to a Brownian motion and is therefore a tight sequence. Hence $(M_n)$ is tight.
\end{proof}

Now we turn to the proof of the main result. 

\begin{proof}[Proof of Theorem \ref{thm:scaling_lim}]
To recall the setup, $X_n(t) = \X(nt) /\sqrt{n}$ is the linearly interpolated rescaling of the p-rotor walk
with the initial configuration \eqref{eq:random_cfg}.
Using the decomposition \eqref{eq:xn_decomposition} we have
\begin{equation}
\label{eq:rescaled_decomposition}
X_n(t) = W_n(t) + a M_n(t) + b m_n(t).
\end{equation}
where $M_n(t)= \M(nt)/ \sqrt{n}$ and $m_n(t)= \m(nt) / \sqrt{n}$ are the linearly interpolated rescalings of the maximum and minimum of
$(\X_n)$, respectively; and $W_n(t)= \f(nt) / \sqrt{n}$
with $\f_n$ defined in Proposition \ref{prop:xn_decomposition}.

The sequence $W_n$ converges weakly in $\mathcal{C}[0,1]$ to a Brownian motion by Lemma \ref{lem:conv_fn}, and the sequences $M_n$ and $m_n$ are tight by Proposition \ref{prop:Mm_tight}. So $X_n$ is a sum of three tight sequences, hence tight. By Prohorov's theorem, every subsequence of $X_n$ contains a further subsequence
that converges weakly in $\mathcal{C}[0,1]$. Let $X_{n_j}$ be a convergent subsequence with 
a weak limit point which we denote by $X$.  

Now we apply the continuous mapping theorem, using the map $\Theta : \mathcal{C}[0,1] \to \mathcal{C}[0,1]$ given by
	\[ \Theta(h)(t) = h(t) - a \sup_{0 \leq s \leq t} h(s) - b \inf_{0 \leq s \leq t} h(s). \]
Rearranging the terms in \eqref{eq:rescaled_decomposition}, we have 
	\[ \Theta(X_{n_j}) = W_{n_j}. \]
By the continuous mapping theorem, the left side converges weakly in $\mathcal{C}[0,1]$ to $\Theta(X)$, and by Lemma~\ref{lem:conv_fn} the right side converges weakly to a Brownian motion $\sqrt{\frac{1-p}{p}} \B$.  We conclude that
	\[ X - a X^{\mathsf{sup}} - b X^{\mathsf{inf}} \; \dequal \; \sqrt{\frac{1-p}{p}} \B \]
as processes on $[0,1]$. It follows that $\sqrt{\frac{p}{1-p}} X$ is a solution of the implicit equation \eqref{eq:perturbed_brownian_motion}, i.e.~it is an $(a,b)$-perturbed 
Brownian motion.

Since $a,b\in(-\infty,1)$, the equation \eqref{eq:perturbed_brownian_motion} uniquely determines the law of $X$ (see Chaumont and Doney \cite[Theorem 2]{chaumont_doney_1999}, who show something stronger:  \eqref{eq:perturbed_brownian_motion} has a \emph{pathwise} unique solution, which is almost surely continuous and adapted to the filtration of $\B$). Hence every convergent subsequence of $X_n$ has the same weak limit point in $\mathcal{C}[0,1]$, which implies that the sequence $X_n$ itself converges weakly (see the Corollary to
Theorem 5.1 in \cite{billingsley_2nd_edition}). 

The same argument proves weak convergence of $(X_n(t))_{0 \leq t \leq T}$ to an $(a,b)$-perturbed Brownian motion in $\mathcal{C}[0,T]$, for each fixed $0<T<\infty$.
Since weak convergence in $\mathcal{C}[0,\infty)$ is defined as weak convergence in  $\mathcal{C}[0,T]$ for all $0<T<\infty$, the proof is complete.
\end{proof}

\begin{remark}\label{rem:in_cfg}
If we change any finite number of rotors in the initial rotor configuration \eqref{eq:random_cfg}, the scaling limit
of the p-rotor walk will be the same as in Theorem \ref{thm:scaling_lim}.
The only difference in the proof is that in the compensator $\mathsf{Z}_n$ from Proposition \ref{prop:z_n}
there will be some additional terms of order $1$, which, after rescaling by $\sqrt{n}$ and letting $n$ go to infinity, go to zero.
\end{remark}

If one of the parameters $\alpha$ or $\beta$ are $0$ in the initial configuration
$\rho_0$ given in \eqref{eq:random_cfg}, then the scaling limit can be determined explicitly and it
is a one-sided perturbed Brownian motion, as in Corollary \ref{cor:one-sided}.
The proof is a simple calculation, which we state here for completeness.
\begin{proof}[Proof of Corollary \ref{cor:one-sided}.]
By letting $\beta=0$, we have $b=0$ and the scaling limit of $(\X_n)$
satisfies the implicit equation
\begin{equation}
 X(t)=\sqrt{\frac{1-p}{p}} \B(t) + a X^{\sup}(t),
\end{equation}
which implies that
\begin{equation*}
(1-a)X^{\sup}(t)= \sup_{s\leq t}\big(X(s)-a X^{\sup}(t)\big)=\sqrt{\frac{1-p}{p}}\sup_{s\leq t}\B(s)=\sqrt{\frac{1-p}{p}}\mathcal{M}(t).
\end{equation*}
Thus
\begin{equation*}
 X^{\sup}(t)=\sqrt{\frac{1-p}{p}}\cdot \frac{1}{1-a}\mathcal{M}(t)
\end{equation*}
and
\begin{equation*}
  X(t)=\sqrt{\frac{1-p}{p}}\Big( \B(t) + \frac{a}{1-a} \mathcal{M}(t)\Big).
\end{equation*}
On the other hand, since $a=\frac{\alpha(2p-1)}{p}$, we have
\begin{equation*}
\frac{a}{1-a}=\lambda_{p,\alpha}= \frac{\alpha(2p-1)}{\alpha(1-p)+p(1-\alpha)}
\end{equation*}
and this proves the claim.
\end{proof}
By symmetry, the same scaling limit can be obtained if we take $\alpha=0$, and the p-rotor walk
with $(0,\beta)$- rescaled by a factor of $\sqrt{n}$ converges weakly to a Brownian motion perturbed at its minimum.

We conclude with the proof of recurrence of p-rotor walk.
\begin{proof}[Proof of Proposition \ref{prop:recurrence}]
In \cite[Theorem 1.1]{chaumont_doney_2000} a version of the law of iterated logarithm
was proved for doubly perturbed Brownian motion $\mathcal{X}_{a,b}(t)$. In particular this implies
that $\limsup_{t\to\infty} \mathcal{X}_{a,b}(t) = +\infty$ and $\liminf_{t\to\infty} \mathcal{X}_{a,b}(t) = -\infty$. This further implies that $\mathcal{X}_{a,b}(t)$ crosses
$0$ an infinite number of times. Hence $\Pb[\X_n = 0 \text{ infinitely often}] = 1$ by
Theorem \ref{thm:scaling_lim} and the fact that p-rotor walk is a nearest neighbor process.
\end{proof}

\section{Higher dimensions and longer jumps}\label{sec:questions}

\paragraph{Two dimensions.}
Consider the following nearest-neighbor walk $(X_n)$ in $\Z^2$, with rotors $\rho_n: \Z^2 \to \{(0,1),(0,-1),(1,0),(-1,0)\}$. Initially the rotors $\rho_0$ are i.i.d.\ uniform. At each time step, the rotor $\rho_n(X_n)$ at the walker's current location either resets to a uniformly random direction (with probability $p$) or rotates clockwise $90^\circ$ (with probability $1-p$), and then the walker follows the new rotor: $X_{n+1} = X_n + \rho_{n+1}(X_n)$. This walk interpolates between the uniform rotor walk ($p=0$) and simple random walk ($p=1$).

\textbf{Question 1}: Prove that $(X_n)$ is recurrent for all $p \in (0,1)$.

\textbf{Question 2}: What is the scaling limit of $(X_n)$, for $p \in (0,1)$?

Following the outline of this paper, one could first seek to understand the process $(X_n)$ in its native environment, which is when $\rho_0$ is a variant of the uniform spanning tree (oriented toward the origin).

\paragraph{Locally Markov walks.}
The p-rotor walk  $(\X_n)$ on $\Z$ is not a Markov chain, 
but for each vertex $x\in\Z$, the sequence of successive exits
from $x$ is a Markov chain. 
More formally, we could make the following definition. 

\begin{definition}
A walk $(\X_n)_{n\geq 0}$ is \emph{locally Markov} if for each vertex $x$ the sequence
$ (X_{t^x_k + 1} )_{k \geq 1}$ is a Markov chain, where $t^x_1 < t^x_2 <
\cdots$ are the times of successive visits to $x$. 
\end{definition}
In the case of the p-rotor walk on $\Z$, for each $x\in\Z$, the Markov chain  $ (\X_{t^x_k + 1} )_{k \geq 1}$
has state space $\{x-1,x+1\}$.  A natural generalization allows longer range jumps while preserving the local Markov property.
Specifically, suppose that each vertex $x$ has a finite state space $Q_x\subset \Z$, and let $(\mathsf{p}_x(q,q'))_{q,q'\in Q_x}$
be a transition matrix on $Q_x$.  The total state of the system is specified by the current location the walker in $\Z$, along with a local state $q_x \in Q_x$ for each $x \in \Z$.
A walker located at $x$ and finding local state $q=q_x$, first changes the local state to $q'$ with 
probability $\mathsf{p}_x(q,q')$ and then moves to $x+q'$. Let us denote by $(X_n)$ the location of the walker after $n$ such steps.
For each $x\in\Z$, the sequence $ (X_{t^x_k + 1} )_{k \geq 1}$ of successive exits from $x$ is a Markov chain on the finite state space $\{x+q:q\in Q_x\}$.  Assume that $\mathsf{p}_x$ is irreducible, so that this chain has a unique stationary distribution $\pi_x$.

The process $(X_n)$  has a corresponding coarse-grained Markov chain $(Y_n)$ on $\Z$ whose transition probabilities are  $p(x,y) = \pi_x(y)$.
Comparing these two processes raises a number of questions.

\textbf{Question 3}: 
Is it possible that $(Y_n)$ is recurrent but $(X_n)$ is transient? If the local chain $ (X_{t^x_k + 1} )_{k \geq 1}$ is only assumed to be \emph{hidden} Markov, then the answer is yes, as shown by Pinsky and Travers \cite{pinski-travers-15}.
For excited random walks with non-nearest neighbor steps, the question of transience/recurrence was investigated in \cite{davis-peterson-15}. It may be useful to understand if one can adapt their results in our setting of locally Markov walks.

\textbf{Question 4}: Suppose $(Y_n)$ has no drift (i.e., each $\pi_x$ has mean $x$). Under what conditions is the scaling limit of $(X_n)$ a perturbed Brownian motion like in Theorem \ref{thm:scaling_lim}? 

\textbf{Question 5}:
What is the scaling limit of $(X_n)$ in the case when $(Y_n)$ has drift?

\section*{Acknowledgements}

We thank B\'{a}lint Toth for bringing reference \cite{toth:rayknight} to our attention, Nick Travers for a helpful conversation, and Boyao Li for helpful comments on an early draft. We also thank to Tal Orenshtein, Jonathon Peterson, and Elena Kosygina for pointing us the connection between excited random walks with Markovian cookie stacks and our model.

\bibliography{p-walk}{}
\bibliographystyle{alpha_arxiv}

\textsc{Wilfried Huss, Institute of Discrete Mathematics, Graz University of Technology, 8010 Graz, Austria.}
\texttt{huss@math.tugraz.at} \\ \url{http://www.math.tugraz.at/~huss}

\textsc{Lionel Levine, Department of Mathematics, Cornell University, Ithaca, NY 14850.}
\url{http://www.math.cornell.edu/~levine}

\textsc{Ecaterina Sava-Huss, Institute of Discrete Mathematics, Graz University of Technology, 8010 Graz, Austria.}
\texttt{sava-huss@tugraz.at}\\
\url{http://www.math.tugraz.at/~sava} 

\end{document}